\documentclass[12pt,a4paper]{article}
\usepackage{graphicx} 
\usepackage{titlesec}
\usepackage{url}

\usepackage{setspace}
\onehalfspacing
\usepackage{fancyhdr}
\usepackage{lastpage}
\usepackage{zhnumber}
\usepackage{amscd}
\usepackage{authblk}

\usepackage{amsmath}
\usepackage{amsthm}
\numberwithin{equation}{section}

\usepackage[utf8]{inputenc}
\usepackage[margin=1in]{geometry}
\usepackage[titletoc,title]{appendix}
\usepackage{graphicx}
\usepackage{mwe}
\usepackage{diagbox}

\usepackage{amsmath,amsfonts,amssymb,mathtools}
\usepackage{mathrsfs}

\newcounter{counter}
\usecounter{counter}
\setcounter{counter}{0}

\addtocounter{counter}{1}

\newtheorem{theorem}{Theorem}[counter]
\newtheorem{corollary}[theorem]{Corollary}

\newtheorem{lemma}[theorem]{Lemma}
\newtheorem{example}[theorem]{Example}

\newtheorem{definition}{Definition}[counter]
\newtheorem*{remark}{Remark}
\numberwithin{equation}{counter}

\usepackage{graphicx,float}
\usepackage{subfigure}

\usepackage[ruled,vlined]{algorithm2e}
\usepackage{algorithmic}

\usepackage{enumerate}


\usepackage{hyperref} 
\usepackage{cleveref}
\hypersetup{colorlinks=true, urlcolor=black, linkcolor= blue,hypertex = true, anchorcolor = blue, citecolor =  blue} 

\titleformat{\section}{\centering\large}{\thesection}{1em}{} 
\titleformat{\subsection}{}{\thesubsection}{1em}{} 

\title{Bounds on two-distance sets in Euclidean space and Unit Sphere}

\fancypagestyle{firstpage}{%
    \fancyhf{} 

    \fancyfoot[L]{
        \footnotesize
        \textit{E-mail address}: $^1$weiqunc493@gmail.com, $^2$whyu@math.ncu.edu.tw 
    }
}

\pagestyle{plain} 
\author[1]{Wei-Chun Chen}
\author[2]{Wei-Hsuan Yu}
\affil[1]{\small National Changhua Senior High School, Changhua 50057, Taiwan.}
\affil[2]{\small Department of Mathematics, National Central University, Taoyuan 32001, Taiwan.}
\date{\small\today}

\begin{document}

\maketitle

\pagenumbering{arabic}

\thispagestyle{firstpage} 

\section*{Abstract}
We establish upper bounds for the size of two-distance sets in Euclidean space and spherical two-distance sets. The main recipe for obtaining upper bounds is the spectral method. We construct Seidel matrices to encode the distance relations and apply eigenvalue analysis to obtain explicit bounds.

For Euclidean space, we have the upper bounds for the cardinality $n$ of a two-distance set. 
\[
    n \le \dfrac{(d+1)\left(\left(\frac{1+\delta^2}{1-\delta^2}\right)^2 - 1\right)}{\left(\frac{1+\delta^2}{1-\delta^2}\right)^2-(d+1)}+1.
\]
if the two distances are $1$ and $\delta$ in $\mathbb{R}^d$.

For spherical two-distance sets with $n$ points and inner products $a, b$ on $\mathbb{S}^{d-1}$, we will have the following:

\[
\begin{cases}
        n \le \dfrac{d\left(\left(\dfrac{a+b-2}{b-a}\right)^2-1\right)}{\left(\dfrac{a+b-2}{b-a}\right)^2-d}, &a+b \ge 0;\\
        n \le \dfrac{(d+1)\left(\left(\dfrac{a+b-2}{b-a}\right)^2-1\right)}{\left(\dfrac{a+b-2}{b-a}\right)^2-(d+1)}, &a+b < 0.
\end{cases}
\]
Notice that the second bound (for $a+b < 0$) is the same as the relative bound for the equiangular lines in one higher dimension. 
\section*{1. Introduction}

Let $\mathbb{R}^d$ be the $d$-dimensional Euclidean space. A set $X$ in $\mathbb{R}^d$ is called a \emph{two-distance set} if the distance between any pair of points in $X$ takes only one of two possible values. The fundamental problem in this area is to determine the maximal cardinality of a two-distance set in $\mathbb{R}^d$. Let $g(d)$ denote the maximum cardinality of  a two-distance set in $\mathbb{R}^d$.

For the general dimension $d$, a well-known construction provides two-distance sets with $\binom{d+1}{2}$ points in $\mathbb{R}^d$, consisting of the midpoints of the edges of a regular simplex. For the upper bounds, Bannai, Bannai, and Stanton \cite{bannai1983upper} proved that $g(d) \leq \binom{d+2}{2}$. Therefore, in general people only know that $\binom{d+1}{2} \leq g(d) \leq \binom{d+2}{2}$. In this paper,  we show that if the ratio of the two distances are given, we can have smaller upper bounds for the size of a two-distance set. 

Larman, Rogers, and Seidel \cite{larman1977two} proved that any two-distance set $X$ in $\mathbb{R}^d$ with $|X| > 2d + 3$ have squared distance ratio $\delta^2 = (k-1)/k$ for some integer $k$ satisfying
\[
2 \le k \le \frac{1+\sqrt{2d}}{2}.
\]
The condition $|X| > 2d + 3$ was later improved to $|X| > 2d + 1$  by Neumaier \cite{neumaier1981distance}, who also showed the existence of $(2d+1)$-point two-distance sets whose $\delta^2$ cannot be expressed in the form $(k-1)/k$, for some $ k \in \mathbb N$.

The maximum cardinalities of two-distance sets were determined for $d\le 8$ \cite{croft19639,kelly1947elementary,LISONEK1997318}.

\begin{table}[h]
    \centering
    \begin{tabular}{c|cccccccc}
        $d$    & 1 & 2 & 3 & 4 & 5 & 6 & 7 & 8 \\ \hline
        $g(d)$ & 2 & 5 & 6 & 9 & 10 & 12 & 29 & 45 \\
    \end{tabular}
    \caption{The maximum cardinalities of two-distance sets for $d\le 8$}
    \label{tab:placeholder}
\end{table}

A two-distance set $X$ is called \emph{spherical} if it lies on the unit sphere $\mathbb{S}^{d-1}$. For spherical two-distance sets, Delsarte, Goethals and Seidel \cite{delsarte1991spherical} proved that the cardinality of a spherical two-distance set $S$ is bounded above by $\frac{1}{2}d(d+3)$ and the equality holds for dimensions $d = 2, 6, 22$. Glazyrin and Yu \cite{glazyrin2018upper} determined the maximum cardinalities of spherical two-distance sets in $\mathbb R^d$, $M(d)$, for all dimensions with possible exceptions $d=(2k+1)^2-3$ where $k \in \mathbb{N}$:
\[
M(d) = \frac{d(d+1)}{2}, \quad \forall d \geq 7 \text{ except } d = (2k+1)^2-3, \; k \in \mathbb{N}.
\]


\subsection*{\textit{1.1. Summary of results}}
We begin by considering the Cayley-Menger matrix associated with a two-distance set, and then construct a corresponding Seidel matrix by appropriately adding a carefully designed auxiliary matrix. We then determine the spectrum of this Seidel matrix using Weyl's inequality. Finally, we exploit the structural properties of Seidel matrices combined with the Cauchy-Schwarz inequality to establish the following upper bounds. \\

\textbf{Theorem A.} \textit{Let $X$ be an $n$-point two-distance set in $\mathbb{R}^d$ with distances $1$ ans $\delta$. Then}
\[
    n \leq \dfrac{(d+1)\left(\left(\dfrac{1+\delta^2}{1-\delta^2}\right)^2 - 1\right)}{\left(\dfrac{1+\delta^2}{1-\delta^2}\right)^2-(d+1)}+1.
\]

Using spectral properties, we establish that certain parameters is odd integers and derive bounds for all possible cases, reproducing the result of Larman, Rogers, and Seidel \cite{larman1977two} through a similar approach to that of Lemmens and Seidel \cite{LEMMENS1973494}.

For spherical two-distance sets, we apply the same framework starting with Gram matrices. The sign of the sum of inner products $a+b$ affects the spectral structure, leading to two distinct cases.

\textbf{Theorem B.} \textit{Let $X$ be an $n$-point spherical two-distance set in $\mathbb{S}^{d-1}$ with inner products $a, b$. If $a+b \geq 0$, then}
\[
    n \leq \dfrac{d\left(\left(\dfrac{a+b-2}{b-a}\right)^2-1\right)}{\left(\dfrac{a+b-2}{b-a}\right)^2-d}.
\]

This result extends the classical bound for equiangular line systems \cite{LEMMENS1973494} to the more general case where $a+b \geq 0$, with the equiangular case corresponding to $a+b = 0$.

\textbf{Theorem C.} \textit{There exists a bijective correspondence between $n$ points equiangular line systems in $\mathbb{R}^{d+1}$ with common angle $\alpha$ and families of $n$ points spherical two-distance sets in $\mathbb{S}^{d-1}$ with inner products $a, b$ satisfying $a+b < 0$ and $\frac{a-b}{a+b-2} = \alpha$.}

Similar integrality conditions hold for spherical cases. When $a+b < 0$, we establish a connection to equiangular line systems, providing a new spectral proof of classical results from Delsarte, Goethals, and Seidel \cite{delsarte1991spherical}.

Finally, we show that these upper bounds are attainable if the corresponding Seidel matrices have exactly two distinct eigenvalues, which occurs precisely when there exists an associated Equiangular Tight Frame. This provides a complete characterization of when our bounds are tight.

\subsection*{\textit{1.2. Outline}}

The objective of this research is to improve upper bounds on $|X|$ by exploiting the spectral properties of associated matrices when the ratio of the distances is specified.

 In Section 2, we transform the two-distance set into a Seidel matrices, which encode the distance relations and enable spectral analysis. In Section 3, we derive upper bounds on the cardinality of two-distance sets using spectral techniques and matrix theory. In Section 4, we focus on spherical two-distance sets, encode the distance relations and enable spectral analysis. In Section 5, we discuss bounds on spherical two-distance sets when the sum of inner products is nonnegative. In Section 6, we analyze the case when the sum of inner products is negative. In Section 7, we establish the connection between the existence of ETFs (Equiangular Tight Frames) and the attainability of the upper bounds derived in the preceding sections.

\section*{2. Construction of Seidel matrix}
\stepcounter{counter}

Lisoněk \cite{LISONEK1997318} introduce the following theorem.

\begin{theorem}[\cite{LISONEK1997318}]
 Let $(c_{ij})$ be a real symmetric $n\times n$ matrix with zero diagonal.  There exist $n$ points $P_1, P_2 , \dots , P_{n} \in \mathbb R^d$ such that $c_{ij} = \|P_i - P_j \|$ if and only if the matrix ($i,j = 1, \dots , n-1$)
 \[
 M_{n-1} = (c_{i,n}^2 + c_{j,n}^2 - c_{i,j}^2)_{ij} 
 \]
 is positive semidefinite and has rank at most $d$.
\end{theorem}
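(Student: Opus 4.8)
The plan is to prove both implications through the standard correspondence between Euclidean distance data and Gram matrices, using the freedom to place $P_n$ at the origin as a normalization.

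For the forward direction, assume points $P_1, \dots, P_n \in \mathbb{R}^d$ exist with $c_{ij} = \|P_i - P_j\|$. I would first translate the whole configuration so that $P_n = 0$; this changes no pairwise distance. Then $c_{i,n} = \|P_i\|$ for every $i \le n-1$, and polarization gives, for $i, j \le n-1$,
\[
  c_{i,n}^2 + c_{j,n}^2 - c_{i,j}^2 = \|P_i\|^2 + \|P_j\|^2 - \|P_i - P_j\|^2 = 2\langle P_i, P_j\rangle .
\]
Hence $M_{n-1} = 2G$, where $G = \big(\langle P_i, P_j\rangle\big)_{i,j=1}^{n-1}$ is a Gram matrix, and therefore positive semidefinite with rank equal to $\dim \mathrm{span}\{P_1,\dots,P_{n-1}\} \le d$. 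Both required properties of $M_{n-1}$ follow immediately.

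For the converse, suppose $M_{n-1}$ is positive semidefinite of rank $r \le d$. Then $\tfrac12 M_{n-1}$ is positive semidefinite of the same rank, so a spectral (or Cholesky) factorization produces vectors $v_1, \dots, v_{n-1} \in \mathbb{R}^{r} \subseteq \mathbb{R}^d$ with $\langle v_i, v_j\rangle = \tfrac12 (M_{n-1})_{ij}$. I would set $P_i := v_i$ for $i \le n-1$ and $P_n := 0$, then check $\|P_i - P_j\| = c_{ij}$ in all cases. The zero-diagonal hypothesis $c_{ii} = 0$ makes the diagonal entries of $M_{n-1}$ equal to $2c_{i,n}^2$, so $\|v_i\|^2 = c_{i,n}^2$, which handles every pair involving $P_n$; and for $i, j \le n-1$, expanding $\|v_i - v_j\|^2 = \|v_i\|^2 + \|v_j\|^2 - 2\langle v_i, v_j\rangle$ and substituting the three identities collapses exactly to $c_{ij}^2$.

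I do not expect a genuine obstacle here — this is the classical Cayley--Menger/Schoenberg argument. The only two places that need care are remembering that re-centering at $P_n$ is distance-preserving, and invoking the zero-diagonal assumption precisely at the step where the self-distances must vanish so that $\|v_i\| = c_{i,n}$; the dimension bound is then automatic from the rank of the factorization.
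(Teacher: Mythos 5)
The paper does not prove this statement; it is quoted directly from Lison\v{e}k with a citation, so there is no in-paper argument to compare against. Your proof is the standard and correct one: translating $P_n$ to the origin turns $M_{n-1}$ into twice a Gram matrix for the forward direction, and factoring $\tfrac12 M_{n-1}$ as a Gram matrix and appending the origin gives the converse, with the zero-diagonal hypothesis used exactly where you say. The only caveat worth recording is that the converse recovers $\|P_i-P_j\| = |c_{ij}|$, so one must implicitly assume $c_{ij}\ge 0$ (as is natural for distance data, and as the forward direction forces), since $M_{n-1}$ depends only on the squares $c_{ij}^2$.
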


    $M$ is also called the Cayley-Menger matrix. For any two-distance set, without loss of generality, we can rescale the set so that the two distinct distances are $1$ and $\delta > 1$. Suppose the distances from $P_1, P_2,\dots ,P_h$ to $P_{n}$ are $1$, and those from the remaining points to $P_{n}$ are $\delta$. Let $M$ be defined as follows:
\[
M =  \begin{bmatrix}
        2 &    & 2-\delta_i^2 &  &  &   &  &  \\
         &   2  &  &                   &  & \delta_i^2  &  &  \\
        2-\delta_i^2   & & \ddots &      &  & \\
           &   &  & 2                  &  &   &  &  \\
           &  &   &  & 2\delta^2 &    & 2\delta^2-\delta_i^2 &  \\
           &  \delta_i^2&   &  &  &   2\delta^2  &  &  \\
           &  & &  & 2\delta^2-\delta_i^2 & & \ddots & \\
           &  &   &  &    &   &  & 2\delta^2\\
    \end{bmatrix}_{(n-1)},
\]
    where $\delta_i \in \{ 1 ,  \delta\}$.

To facilitate analysis of two-distance sets, we transform the Cayley-Menger matrix into a form Seidel matrix by constructing a block matrix $D$.
    
\begin{definition} Let $X$ be a two-distance set with $n$ points in $\mathbb R^d$, where the two distances are $1$ and $\delta$. Let $M = M(X)$ be the Cayley-Menger matrix. Define the Seidel matrix $ S$ by
\[
 S_{n-1} = \frac{2M_{n-1} +  D_{n-1}  - (1 + \delta^2) \cdot I_{n-1} } {\delta^2-1}
\] 
where
\[
D_{n-1} = \begin{bmatrix}
    (-3 + \delta^2) J_h & -(1 + \delta^2) J_{h,n-h-1}  \\
    -(1 + \delta^2) J_{n-h-1,k} & (1 -3 \delta^2) J_{n-h-1} \\
\end{bmatrix}_{n-1}.
\]
where $J_{p, q}$ is a $p \times q$ all-ones matrix.
\end{definition}

\begin{remark}
    Notice that upper left block of $2M$ has non-diagnal entries $2$ or $2(2-\delta^2)$. We subtract the middle value of them, and normalize it. So $D_{n-1}$ is the negative middle value.
\end{remark}

\begin{lemma}
    The matrix S defined above is indeed a Seidel matrix.
\end{lemma} 
\begin{proof}
    We analyze each block of the matrix to verify that S has the structure of a Seidel matrix, having zeros on the diagonal and $\pm 1$ entries non-diagonal.
    
    First, the diagonal elements of the upper left $h\times h$ block are calculated as $(4+(-3+\delta^2)-(1+\delta^2))/(\delta^2-1)$, which yield $0$. The non-diagonal elements are found to be $(4-2\delta_i^2+(-3+\delta^2))/(\delta^2-1) = (1+\delta^2 -2\delta_i^2)/(\delta^2-1) = \pm 1$. A similar calculation applies to the lower right $(n-h-1)\times (n-h-1)$ block.
    
    Next, we consider the upper right $h \times (n-h-1)$ block. The elements in this block are $(2\delta^2_i-(1+\delta^2))/(\delta^2-1) = \pm 1$. A similar calculation holds for the lower left $(n-h-1)\times h$ block.
    
    Therefore, $S$ has zero diagonal elements and non-diagonal elements equal to $\pm 1$. Since $M$, $D$, and $I$ are all symmetric matrices, their linear combination $S$ is also symmetric. This confirms that $S$ has the structure of a Seidel matrix.
\end{proof}

    Next, we determine the spectrum of matrix $D$.

\begin{lemma}
    The spectrum of $D$ is $\{ [a_2] ,[0]^{n-3}, [a_1]\}$ with $a_2 < 0 < a_1$, where $i = 1, 2$
\begin{equation*}
\begin{aligned}
    a_i =&  -2(1-\delta^2)h+\left(\dfrac{1-3\delta^2}{2}\right)(n-1) \\
    &+ (-1)^{i+1}  \sqrt{2(n-1)h(1-\delta^2)(1+\delta^2) + (n-1)^2(\frac{1-3\delta^2}{2})^2}.
\end{aligned}
\end{equation*}
\end{lemma}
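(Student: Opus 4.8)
The plan is to exploit the block structure of $D$, which is built entirely out of all-ones blocks. Observe that $D$ has the form
\[
D_{n-1} = \begin{bmatrix}
    \alpha\, J_h & \beta\, J_{h,n-h-1}\\
    \beta\, J_{n-h-1,h} & \gamma\, J_{n-h-1}
\end{bmatrix},
\qquad \alpha = -3+\delta^2,\ \beta = -(1+\delta^2),\ \gamma = 1-3\delta^2,
\]
and every such matrix acts as zero on the $(n-3)$-dimensional subspace of vectors that are constant-sum-zero on \emph{both} index blocks separately; this immediately accounts for the eigenvalue $0$ with multiplicity $n-3$. So the first step is to record that $D$ has rank at most $2$ and that the remaining spectrum lives on the two-dimensional invariant subspace spanned by $\mathbf 1_h \oplus 0$ and $0 \oplus \mathbf 1_{n-h-1}$.

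Second, I would restrict $D$ to that invariant subspace. On the basis $\{\mathbf 1_h \oplus 0,\ 0 \oplus \mathbf 1_{n-h-1}\}$ the action is given by the $2\times 2$ matrix
\[
\widetilde D = \begin{bmatrix} \alpha h & \beta h\\ \beta(n-h-1) & \gamma(n-h-1)\end{bmatrix},
\]
(note this basis is not orthonormal, so $\widetilde D$ need not be symmetric, but its eigenvalues are still the nonzero eigenvalues of $D$). The nonzero eigenvalues of $D$ are therefore the roots of $\lambda^2 - (\operatorname{tr}\widetilde D)\lambda + \det\widetilde D = 0$, i.e.
\[
\lambda^2 - \bigl(\alpha h + \gamma(n-h-1)\bigr)\lambda + h(n-h-1)\bigl(\alpha\gamma - \beta^2\bigr) = 0.
\]

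Third, it is a routine substitution to plug in $\alpha,\beta,\gamma$ and simplify. The trace term $\alpha h + \gamma(n-h-1) = (-3+\delta^2)h + (1-3\delta^2)(n-h-1)$ rearranges (using $n-h-1 = (n-1)-h$) to $-2(1-\delta^2)h + \bigl(\tfrac{1-3\delta^2}{2}\bigr)\cdot 2(n-1) + \text{(lower order)}$; more carefully one checks it equals $-4(1-\delta^2)h + (1-3\delta^2)(n-1)$, matching twice the non-radical part of the claimed $a_i$. For the constant term one computes $\alpha\gamma - \beta^2 = (-3+\delta^2)(1-3\delta^2) - (1+\delta^2)^2 = -2(1-\delta^2)(1+\delta^2)\cdot(\text{something})$; then the discriminant of the quadratic, $(\operatorname{tr}\widetilde D)^2 - 4\det\widetilde D$, collapses to $8(n-1)h(1-\delta^2)(1+\delta^2) + (n-1)^2(1-3\delta^2)^2$ up to the factor of $4$ that comes out as a $\tfrac12$ in front. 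The quadratic formula then yields exactly the stated closed form for $a_1, a_2$.

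Finally, I would address the sign claim $a_2 < 0 < a_1$. Since $\delta > 1$ we have $1-\delta^2 < 0$ and $1+\delta^2 > 0$, so the product term $2(n-1)h(1-\delta^2)(1+\delta^2)$ under the radical is negative; for the radical to be real (which it must be, $D$ being symmetric) this forces a comparison showing the radical is strictly smaller in absolute value than $|{-2(1-\delta^2)h + (\tfrac{1-3\delta^2}{2})(n-1)}|$ is false in general — rather, one shows $\det\widetilde D = h(n-h-1)(\alpha\gamma-\beta^2) < 0$, which is equivalent to $\alpha\gamma - \beta^2 < 0$, and this sign of the determinant of the restricted map immediately gives one positive and one negative eigenvalue. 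Checking $\alpha\gamma - \beta^2 < 0$ for $\delta>1$ is a one-line inequality. The main obstacle is purely bookkeeping: making sure the $\tfrac12$'s, the factor $n-h-1$ versus $n-1$, and the sign of $\delta^2-1$ are tracked consistently so that the simplified discriminant matches the paper's expression exactly; there is no conceptual difficulty once the rank-$2$ reduction is in place.
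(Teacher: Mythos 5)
Your proposal is correct and is essentially the paper's own argument in a slightly different dress: the paper also gets the $0$-eigenspace of dimension $n-3$ from the block structure and then finds the two remaining eigenvalues on the span of the block indicator vectors (via the ansatz $(x,\dots,x,1,\dots,1)^{\mathsf T}$ rather than the $2\times 2$ restriction $\widetilde D$), and its sign analysis $p^2-q^2=-4(1-\delta^2)^2h(n-h-1)<0$ is exactly your $\det\widetilde D=h(n-h-1)(\alpha\gamma-\beta^2)<0$. One cosmetic remark: the matrix $\widetilde D$ you wrote is the transpose of the actual matrix of $D$ restricted to the basis $\{\mathbf 1_h\oplus 0,\ 0\oplus\mathbf 1_{n-h-1}\}$, but since trace and determinant are unaffected this changes nothing.
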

\begin{proof}
   We determine the spectrum of matrix $D$ by analyzing its eigenvalue structure.

    \textbf{Step 1:} Zero eigenvalues from block structure
    
    The spectrum of $J_h$ is $\{ [0]^{h-1}, [h]\}$. Let $\mathbf v_1, \mathbf v_2,\dots,\mathbf v_{h-1}$ be the eigenvectors of $J_h$ which correspond to the eigenvalue 0. Then the sum of the entries in $\mathbf v_i$ are $0$, where $i=1,\dots,h-1$. Define ($i=1,\dots,h-1$)
    \[
    \mathbf v_i' = \left[\begin{matrix}
        \mathbf v_i^{\mathsf T} &0 &0 &\dots &0
    \end{matrix}\right]^{\mathsf T} 
    \]
    be a $(n-1)\times1$ vector with $n-h-1$ $0$s at the end. ince $D\mathbf{v}_i' = 0 \cdot \mathbf{v}_i'$, each $\mathbf{v}_i'$ is an eigenvector of $D$ with eigenvalue $0$. 
    Similarly, the spectrum of $J_{n-h-1}$ is $\{[0]^{n-h-2}, [n-h-1]\}$. Let $\mathbf{v}_h, \mathbf{v}_{h+1}, \ldots, \mathbf{v}_{n-3}$ be the eigenvectors of $J_{n-h-1}$ corresponding to eigenvalue 0. We extend these by defining ($j=h,\dots,n-3$)
    \[
    \mathbf v_j' = \left[\begin{matrix}
        0 &0 &\dots &0 &\mathbf v_j^{\mathsf T} 
    \end{matrix}\right]^{\mathsf T} 
    \]
    be a $(n-1)\times1$ vector with $h$ 0s at the beginning. Since $D\mathbf{v}_j' = 0 \cdot \mathbf{v}_j'$, each $\mathbf{v}_j'$ is an eigenvector of $D$ with eigenvalue $0$. Therefore, $D$ has eigenvalue $0$ with multiplicity $n-3$.
    
    \textbf{Step 2:} Non-zero eigenvalues

    To find the remaining eigenvalues, we consider eigenvectors of the form
    \[
\mathbf{u} = \left[\begin{matrix}
x & x & \ldots & x & 1 & 1 & \ldots & 1
\end{matrix}\right]^{\mathsf T},
\]
where there are $h$ entries equal to $x$ and $n-h-1$ entries equal to 1.
Computing $D\mathbf{u}$, we obtain
\[
     D\mathbf u = \begin{bmatrix}
        (-3+\delta^2)hx-(1+\delta^2)(n-h-1) \\
        (-3+\delta^2)hx-(1+\delta^2)(n-h-1) \\
         \vdots \\
        (-3+\delta^2)hx-(1+\delta^2)(n-h-1) \\
         -(1+\delta^2)hx+(1-3\delta^2)(n-h-1) \\
         -(1+\delta^2)hx+(1-3\delta^2)(n-h-1) \\
         \vdots \\
         -(1+\delta^2)hx+(1-3\delta^2)(n-h-1) 
    \end{bmatrix}
    \]
For $\mathbf{u}$ to be an eigenvector, we need $D\mathbf{u} = \lambda\mathbf{u}$, which requires:
\begin{align}
(-3+\delta^2)hx-(1+\delta^2)(n-h-1) &= \lambda x \\
-(1+\delta^2)hx+(1-3\delta^2)(n-h-1) &= \lambda
\end{align}
   From the consistency condition, we get:
    \begin{equation*}
    \begin{aligned}
    \dfrac{(-3+\delta^2)hx-(1+\delta^2)(n-h-1)}{-(1+\delta^2)hx+(1-3\delta^2)(n-h-1} = x
    \end{aligned}
    \end{equation*}
Solving for $x$:
    \[
    x = \frac{2(1+\delta^2)h+(1-3\delta^2)(n-1)\pm \sqrt{8(n-1)h(1+\delta^2)(1-\delta^2)+(n-1)^2(1-3\delta^2)^2}}{2h(1+\delta^2)}
    \]
The corresponding eigenvalues are:
    \begin{equation*}
    \begin{aligned}
      a_{1,2} = -(1+\delta^2)hx+(1-&3\delta^2)(n-h-1) =   -2(1-\delta^2)h+\left(\dfrac{1-3\delta^2}{2}\right)(n-1) \\
    &\pm \sqrt{2(n-1)h(1+\delta^2)(1-\delta^2)+(n-1)^2\left(\dfrac{1-3\delta^2}{2}\right)^2}
    \end{aligned}
    \end{equation*}
    
    \textbf{Step 3:} Sign analysis\\
    \begin{equation*}
    \begin{aligned}
        \text{ Let }p = -2(1-\delta^2)h+\left(\dfrac{1-3\delta^2}{2}\right)&(n-1), \\
        \text{ and }q &=\sqrt{2(n-1)h(1+\delta^2)(1-\delta^2)+(n-1)^2\left(\dfrac{1-3\delta^2}{2}\right)^2}.
    \end{aligned}
    \end{equation*}
    Then $p^2 - q^2$
    \begin{equation*}
    \begin{aligned}
    &=(-2(1-\delta^2)h+\left(\dfrac{1-3\delta^2}{2}\right)(n-1))^2 - 2(n-1)h(1+\delta^2)(1-\delta^2) - (n-1)^2\left(\dfrac{1-3\delta^2}{2}\right)^2\\
    & =  4(1-\delta^2)^2h^2 -2(n-1)h(1-\delta^2)(1-3\delta^2)- 2(n-1)h(1+\delta^2)(1-\delta^2) \\
    & = 4(1-\delta^2)^2h^2 -4(n-1)h(1-\delta^2)^2 \\
    & = -4(1-\delta^2)^2h(n-h-1) < 0
    \end{aligned}
    \end{equation*}
    Since $p^2 < q^2$, we have $|p| < q$, which implies $a_2 = p - q < 0 < p + q = a_1$. 
    
    Therefore, the spectrum of $D$ is $\{[a_1], [0]^{n-3}, [a_2]\}$ with $a_1 < 0 < a_2$. $\square$
\end{proof}    

In order to analyze the spectrum of $S$, we apply Weyl's inequality.

\begin{theorem}[Weyl's inequality\cite{franklin2000matrix}]
Let $M = N + R$, $N$, and $R$ be $n \times n$ symmetric matrices, with their respective eigenvalues in descending order $\lambda_1 \ge \lambda_2 \ge \dots \ge \lambda_n$.
Then the following inequalities hold:

\[
\lambda_i(N) + \lambda_n (R) \le \lambda_i(M) \le \lambda_i (N) + \lambda_1 (R)
\]
for $i= 1,\dots,n$.
More generally,
\[
\lambda_j(N) + \lambda_k (R) \le \lambda_i(M) \le \lambda_r (N) + \lambda_s (R)
\]
for $j+k- n\ge i \ge r+s-1$.
\end{theorem}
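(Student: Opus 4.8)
The plan is to derive Weyl's inequality from the Courant--Fischer variational characterization of eigenvalues, which I would either invoke as a standard fact or prove first by the same technique used below. For an $n\times n$ symmetric matrix $A$ with eigenvalues $\lambda_1(A)\ge\cdots\ge\lambda_n(A)$, this characterization states
\[
\lambda_i(A)=\max_{\dim V=i}\ \min_{0\ne x\in V}\frac{x^{\mathsf{T}}Ax}{x^{\mathsf{T}}x}=\min_{\dim W=n-i+1}\ \max_{0\ne x\in W}\frac{x^{\mathsf{T}}Ax}{x^{\mathsf{T}}x},
\]
where $V,W$ range over subspaces of $\mathbb{R}^n$ of the indicated dimensions. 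Everything then follows by choosing the right test subspaces, built from eigenvectors of $N$ and of $R$, together with the elementary fact that any two subspaces $U_1,U_2\subseteq\mathbb{R}^n$ satisfy $\dim(U_1\cap U_2)\ge\dim U_1+\dim U_2-n$.

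For the basic bound $\lambda_i(N)+\lambda_n(R)\le\lambda_i(M)$, I would take $V$ to be the span of eigenvectors of $N$ for its $i$ largest eigenvalues. Then $\dim V=i$, and for every $x\in V$ we have $x^{\mathsf{T}}Nx\ge\lambda_i(N)\,x^{\mathsf{T}}x$, while for every $x$ we have $x^{\mathsf{T}}Rx\ge\lambda_n(R)\,x^{\mathsf{T}}x$; adding these and feeding the result into the max--min formula gives $\lambda_i(M)\ge\lambda_i(N)+\lambda_n(R)$. The companion bound $\lambda_i(M)\le\lambda_i(N)+\lambda_1(R)$ then comes for free by applying what was just proved to the decomposition $N=M+(-R)$, using $\lambda_n(-R)=-\lambda_1(R)$.

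For the general inequalities I would repeat the argument with intersections of eigenspaces. For the lower bound, let $V_1$ be the span of eigenvectors of $N$ for its $j$ largest eigenvalues and $V_2$ the span of eigenvectors of $R$ for its $k$ largest eigenvalues; then $\dim(V_1\cap V_2)\ge j+k-n\ge i$, so $V_1\cap V_2$ contains an $i$-dimensional subspace on which $x^{\mathsf{T}}Mx\ge(\lambda_j(N)+\lambda_k(R))\,x^{\mathsf{T}}x$, and the max--min formula gives $\lambda_i(M)\ge\lambda_j(N)+\lambda_k(R)$. The upper bound is dual: take $W_1$ spanned by eigenvectors of $N$ for its $n-r+1$ smallest eigenvalues and $W_2$ spanned by eigenvectors of $R$ for its $n-s+1$ smallest eigenvalues, so that $\dim(W_1\cap W_2)\ge(n-r+1)+(n-s+1)-n=n-(r+s-1)+1$, which is $\ge n-i+1$ exactly when $i\ge r+s-1$; on $W_1\cap W_2$ one has $x^{\mathsf{T}}Mx\le(\lambda_r(N)+\lambda_s(R))\,x^{\mathsf{T}}x$, and the min--max formula gives $\lambda_i(M)\le\lambda_r(N)+\lambda_s(R)$. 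Specializing $(j,k)=(i,n)$ and $(r,s)=(i,1)$ recovers the basic inequalities.

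The one place that needs care --- the expected main obstacle, insofar as a classical result has one --- is matching the index hypotheses $j+k-n\ge i$ and $i\ge r+s-1$ to the dimension counts, i.e.\ verifying that the chosen test subspaces are large enough to be admissible in the respective min--max formulas. The rest is purely formal once the Courant--Fischer characterization is in hand, which is why I would state that characterization carefully at the outset and, if a self-contained treatment is desired, prove it by the same eigenvector-subspace-plus-dimension-count technique.
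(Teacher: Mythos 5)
The paper does not prove this theorem: it is quoted as a classical result with a citation to Franklin's matrix theory text, so there is no in-paper argument to compare against. Your proof via the Courant--Fischer min--max characterization, using test subspaces spanned by eigenvectors of $N$ and $R$ together with the dimension count $\dim(U_1\cap U_2)\ge \dim U_1+\dim U_2-n$, is the standard textbook proof of Weyl's inequality and is correct; in particular the index bookkeeping matching $j+k-n\ge i$ and $i\ge r+s-1$ to the required subspace dimensions, and the derivation of the upper bound from the lower bound via $N=M+(-R)$, both check out.
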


\begin{theorem}
    $ S = \frac{2M +  D  - (1 + \delta^2) \cdot I } {\delta^2-1}$ has the smallest eigenvalue $\lambda_{n-1}$ with multiplicity $1$ and second smallest eigenvalue $\frac{1+\delta^2}{1-\delta^2}$ with the multiplicity at least $n-d-3$.
\end{theorem}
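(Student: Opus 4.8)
The plan is to diagonalise the problem by rewriting $S$ as a shifted, rescaled copy of $2M+D$. From the definition of $S$ and the identity $-(1+\delta^2)I=(\delta^2-1)\cdot\frac{1+\delta^2}{1-\delta^2}I$ one gets
\[
S=\frac{2M+D}{\delta^2-1}+\beta I,\qquad \beta:=\frac{1+\delta^2}{1-\delta^2},
\]
and since $\delta>1$ gives $\delta^2-1>0$, we have $\lambda_i(S)=\frac{\lambda_i(2M+D)}{\delta^2-1}+\beta$ with the orderings matched. So the theorem reduces to two facts about $T:=2M+D$: (i) $0$ is an eigenvalue of $T$ of multiplicity at least $n-d-3$, and (ii) $T$ has exactly one strictly negative eigenvalue.

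For (i), Theorem~1 tells us the Cayley--Menger matrix $M$ is positive semidefinite of rank at most $d$ (indeed $M=2G$ with $G$ the Gram matrix of $P_1-P_n,\dots,P_{n-1}-P_n$), while the Lemma on the spectrum of $D$ gives $\operatorname{rank}(D)=2$; hence $\operatorname{rank}(T)\le d+2$, so $T$ has nullity at least $(n-1)-(d+2)=n-d-3$, i.e.\ $\beta$ is an eigenvalue of $S$ of multiplicity at least $n-d-3$. For the sign count, note that $2M\succeq 0$, so Weyl's inequality yields $\lambda_k(T)\ge\lambda_k(D)$ for all $k$; as the spectrum of $D$ is $\{[a_2],[0]^{n-3},[a_1]\}$ with $a_2<0<a_1$, we get $\lambda_k(D)\ge 0$ for $k=1,\dots,n-2$, so $T$ has at most one negative eigenvalue, necessarily $\lambda_{n-1}(T)$. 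Granting (ii), the eigenvalues of $T$ are: one negative, then $0$ with its full multiplicity (at least $n-d-3$), then at most $d+1$ positive ones; in particular $\lambda_{n-2}(T)=0$, so $\lambda_{n-1}(S)<\beta=\lambda_{n-2}(S)$, which is exactly the claim. (If $n\le d+3$ the statement is vacuous, so I assume $n\ge d+4$.)

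The crux --- and the step I expect to be the main obstacle --- is (ii): that $T=2M+D$ is not positive semidefinite. I would establish it by exhibiting a vector $w$ with $w^{\top}Tw<0$. On $\ker M=\{w:\sum_j w_j(P_j-P_n)=0\}$, a space of dimension $n-1-d$, one has $w^{\top}Tw=w^{\top}Dw$, and the block form of $D$ shows $w^{\top}Dw$ depends only on $s:=\sum_{i\in A}w_i$ and $t:=\sum_{i\in B}w_i$, where $A$ (resp.\ $B$) indexes the points at distance $1$ (resp.\ $\delta$) from $P_n$:
\[
w^{\top}Dw=(\delta^2-3)s^2-2(1+\delta^2)st+(1-3\delta^2)t^2 .
\]
Since $\delta>1$ makes $1-3\delta^2<0$, this is negative whenever $s=0\ne t$; so it suffices to produce $w\in\ker M$ with $\sum_{i\in A}w_i=0$ but $\sum_{i\in B}w_i\ne 0$. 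A dimension count gives $\dim(\ker M\cap\mathbf 1_A^{\perp})\ge n-d-2$, and such a $w$ exists unless this subspace lies in $\mathbf 1_B^{\perp}$, i.e.\ unless $\mathbf 1_B\in\operatorname{col}(M)+\operatorname{span}(\mathbf 1_A)$ --- a degenerate alignment between the column space of the Cayley--Menger matrix and the block-indicator vectors. Excluding this (or, when it occurs, running the symmetric argument: $w^{\top}Dw<0$ also when $t=0\ne s$ provided $\delta^2<3$, and one may similarly test the all-ones vector $\mathbf 1=\mathbf 1_A+\mathbf 1_B$) is the delicate point, and there I would use that both distances genuinely occur and that $X$ spans $\mathbb R^d$.
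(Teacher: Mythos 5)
Your proposal follows essentially the same route as the paper for everything the paper actually proves: both arguments reduce the claim to the spectrum of $W=2M+D$, using positive semidefiniteness and rank $\le d$ of $M$ together with the spectrum $\{[a_2],[0]^{n-3},[a_1]\}$ of $D$, and then shift and rescale. The one real difference is how you locate the eigenvalue $\beta=\frac{1+\delta^2}{1-\delta^2}$: you combine rank subadditivity, $\operatorname{rank}(2M+D)\le d+2$, with the one-sided Weyl bound $\lambda_k(W)\ge\lambda_k(D)$ (so at most one negative eigenvalue), whereas the paper squeezes $\lambda_{n-2}(W)=\cdots=\lambda_{d+2}(W)=0$ between two two-sided Weyl inequalities. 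The two derivations are equivalent in strength here; yours is a bit more economical. Regarding your item (ii) --- exhibiting $w$ with $w^{\top}(2M+D)w<0$ so that the smallest eigenvalue of $S$ is \emph{strictly} below $\beta$ and hence has multiplicity exactly $1$ --- you should know that the paper does not prove this either: its argument only yields $\lambda_{n-1}(W)\le 0$, hence $\lambda_{n-1}(S)\le\beta$, and the ``multiplicity $1$'' clause of the statement is left unjustified there. You have correctly isolated the delicate point, and the obstruction you describe is real (the image of $\ker M$ under $w\mapsto(\sum_{i\in A}w_i,\sum_{i\in B}w_i)$ could in principle avoid the negativity cone of the indefinite form $w^{\top}Dw$), so your sketch of (ii) is not yet a proof. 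But nothing downstream requires strictness: the relative bound in Section 3 only uses $\lambda_{n-1}\le-\gamma$, which both your argument and the paper's deliver. In short, your proposal matches the paper on all the points the paper establishes, improves slightly on presentation, and the one unfinished step is a gap in the stated theorem itself rather than one introduced by you.
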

\begin{proof}

Consider the eigenvalue of $W = 2M + D$. Let $\lambda_1 \ge \lambda_2 \ge \dots \ge \lambda_{n-1}$ denote the eigenvalues of matrices $2M$, $D$, and $W$ in descending order, respectively.

Since $2M$ is a positive semidefinite matrix with rank at most $n-d-1$, we have $\lambda_{n-1}(2M) =\lambda_{n-2}(2M) = \dots =\lambda_{d+1}(2M) = 0$.
\[
\lambda_{n-1}(D) < 0 ,\ \lambda_{n-2}(D) = \dots =\lambda_{2}(D) = 0 ,\ \lambda_{1}(D) > 0
\]
Applying Weyl's inequality,
\[
 \lambda_{n-1}(W) \le 0 
\]
\[
0 \le \lambda_{n-i}(W) \le 0
\]
\[
0 \le \lambda_{d+1}(W), \ 0 \le \lambda_{d}(W)
\]
\[
0 < \lambda_{n-j}(W) 
\]
for $i= 2,\dots,n-d-2 , j = n-d+1 , \dots , n-1$.

Therefore, the matrix $W = 2M + D$ has the following eigenvalue structure: $\lambda_{n-1}(W) \leq 0$, eigenvalues $\lambda_{n-2}(W) = \cdots = \lambda_{d+2}(W) = 0$ with multiplicity $n-d-3$, and positive eigenvalues $\lambda_{d+1}(W), \ldots, \lambda_1(W) \ge 0$.

Since ${S} = \frac{W - (1+\delta^2)I}{\delta^2-1}$ and $\delta^2 - 1 > 0$, the eigenvalues of ${S}$ are given by:
\[
\lambda_i({S}) = \frac{\lambda_i(W) - (1+\delta^2)}{\delta^2-1}
\]

This transformation preserves the ordering and multiplicities. The zero eigenvalues of $W$ become $\frac{-(1+\delta^2)}{\delta^2-1} = \frac{1+\delta^2}{1-\delta^2}$ in ${S}$ with multiplicity at least $n-d-3$. The smallest eigenvalue $\lambda_{n-1}(W) \leq 0$ becomes the smallest eigenvalue $\lambda_{n-1}({S})$ of ${S}$, completing the proof.
\end{proof}

\section*{3. Upper bound of two distance sets}
\stepcounter{counter}

The spectral properties established in the preceding section enable us to derive upper bounds on the size of two-distance sets.

\begin{theorem} \label{lem:RelativeBound}
Let $X$ be an $n$ points two-distance set in $\mathbb R^d$ with distances $1,\delta$.  Then 
\begin{equation}
    n \le \frac{(d+1)\left(\left(\dfrac{1+\delta^2}{1-\delta^2}\right)^2 - 1\right)}{\left(\dfrac{1+\delta^2}{1-\delta^2}\right)^2-(d+1)}+1.
\end{equation} 
\end{theorem}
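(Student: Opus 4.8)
The plan is to combine the spectral structure of the Seidel matrix $S$ established in Theorem~2.6 with a trace identity. Since $S$ is an $(n-1)\times(n-1)$ Seidel matrix, it has zero diagonal, so $\operatorname{tr}(S) = 0$ and $\operatorname{tr}(S^2) = (n-1)(n-2)$ (each off-diagonal entry is $\pm 1$, and there are $(n-1)(n-2)$ of them). By Theorem~2.6, the eigenvalues of $S$ consist of the second-smallest eigenvalue $\mu := \frac{1+\delta^2}{1-\delta^2}$ with multiplicity $m \ge n-d-3$, together with the smallest eigenvalue $\lambda_{n-1}$ (multiplicity $1$), and the remaining at most $d+1$ eigenvalues lying strictly above $\mu$. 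The key step is to set up the two trace equations and then optimize.

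First I would denote the eigenvalues of $S$ that are $\neq \mu$ by $\lambda_{n-1}, \theta_1,\dots,\theta_r$ where $r \le d+1$ and $m = (n-1) - 1 - r \ge n-d-3$. Writing $T := \lambda_{n-1} + \sum_{j=1}^r \theta_j$ and $Q := \lambda_{n-1}^2 + \sum_{j=1}^r \theta_j^2$, the trace conditions give $m\mu + T = 0$ and $m\mu^2 + Q = (n-1)(n-2)$. Hence $T = -m\mu$ and $Q = (n-1)(n-2) - m\mu^2$. Now I would apply the Cauchy--Schwarz inequality in the form $(r+1)Q \ge T^2$ (there are $r+1$ eigenvalues among $\lambda_{n-1},\theta_1,\dots,\theta_r$), which yields
\[
(r+1)\bigl((n-1)(n-2) - m\mu^2\bigr) \ge m^2\mu^2.
\]
Since $r+1 \le d+2$ and $m = n-2-r \ge n-d-3$, I substitute the extremal value $r+1 = d+2$, $m = n-d-3$ to get the worst case (one must check the monotonicity so that the substitution is valid — this is where care is needed). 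This produces a polynomial inequality in $n$ and $\mu^2$; solving for $n$ should rearrange into
\[
n - 1 \le \frac{(d+1)(\mu^2 - 1)}{\mu^2 - (d+1)},
\]
which is exactly the claimed bound after adding $1$.

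The main obstacle I anticipate is the monotonicity/extremality argument: the raw inequality $(r+1)Q \ge T^2$ mixes $r$ (controlling $r+1$) and $m = n-2-r$, and one needs to argue that the bound on $n$ is weakest (largest RHS) precisely when $r+1$ is as large as possible and $m$ as small as possible, i.e. when the ``excess'' eigenvalues number exactly $d+1$ and the multiplicity of $\mu$ is exactly $n-d-3$. Concretely, treating the inequality as a quadratic constraint, one shows that for fixed $n$ the relation $(r+1)((n-1)(n-2) - (n-2-r)\mu^2) \ge (n-2-r)^2\mu^2$ is hardest to satisfy (most restrictive) at $r = d+1$, equivalently that any two-distance set meeting the hypotheses forces this inequality at $r+1 = d+2$. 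A secondary subtlety is justifying $\mu^2 > d+1$ (needed for the denominator to be positive and the bound meaningful), which should follow because otherwise the inequality is vacuous or gives no constraint — one may need to note that $\mu^2 \le d+1$ would force $n$ small anyway, or invoke that the $d+1$ excess eigenvalues sitting above $\mu$ together with the Cauchy--Schwarz slack force $\mu^2 > d+1$ whenever $n$ exceeds the simplex bound. Finally, I would double-check the edge cases where $n - d - 3 \le 0$, in which the statement is trivially true since then $n \le d+3$ is already smaller than the right-hand side.
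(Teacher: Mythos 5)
There is a genuine gap in how you apply Cauchy--Schwarz. You include the smallest eigenvalue $\lambda_{n-1}$ among the $r+1$ numbers in the inequality $(r+1)Q\ge T^2$. If you actually carry out your computation with $r+1=d+2$ and $m=n-d-3$, the inequality $m^2\mu^2\le (d+2)\bigl((n-1)(n-2)-m\mu^2\bigr)$ simplifies (using $m+(d+2)=n-1$) to $(n-1)\bigl(\mu^2-(d+2)\bigr)\le (d+2)(\mu^2-1)$, i.e.
\[
n-1\le \frac{(d+2)(\mu^2-1)}{\mu^2-(d+2)},
\]
which is \emph{strictly weaker} than the claimed $n-1\le \frac{(d+1)(\mu^2-1)}{\mu^2-(d+1)}$: for $x=\mu^2>d+2$ the difference of the two right-hand sides is $\frac{x(x-1)}{(x-d-1)(x-d-2)}>0$. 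So the rearrangement you assert ``should'' produce the $(d+1)$ bound does not; your argument proves only the $(d+2)$ version of the theorem. (Your monotonicity worry about $r<d+1$ is harmless --- extra copies of $\mu$ only strengthen the bound --- and your observation that one must assume $\mu^2>d+1$ for the statement to be meaningful is correct; the paper is silent on that too.)

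The missing idea is to apply Cauchy--Schwarz only to the $d+1$ eigenvalues lying above $\mu$ and to exploit the additional information that $\lambda_{n-1}\le\mu$, rather than treating $\lambda_{n-1}$ as an unconstrained extra term. Writing $\gamma=-\mu>0$, the trace identities give $\sum_{i=1}^{d+1}\lambda_i=-\lambda_{n-1}+(n-d-3)\gamma$ and $\sum_{i=1}^{d+1}\lambda_i^2=(n-1)(n-2)-\lambda_{n-1}^2-(n-d-3)\gamma^2$, and $\bigl(\sum_{i=1}^{d+1}\lambda_i\bigr)^2\le(d+1)\sum_{i=1}^{d+1}\lambda_i^2$ rearranges (with $u=\lambda_{n-1}+\gamma$) to
\[
\bigl(\gamma^2-(d+1)\bigr)(n-1)^2-(d+1)(\gamma^2-1)(n-1)\le 2\gamma u(n-1)-(d+2)u^2.
\]
Since $\lambda_{n-1}\le-\gamma$ forces $u\le0$ while $\gamma>0$, the right-hand side is $\le 0$, and discarding it yields $(\gamma^2-(d+1))(n-1)\le(d+1)(\gamma^2-1)$, which is the stated bound. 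This is exactly where the improvement from $d+2$ to $d+1$ comes from, and it is the step your proposal omits.
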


\begin{proof}
Let $ S =  S(X)$ be the Seidel matrix of $X$ with smallest eigenvalue $\lambda_{n-1}$ with multiplicity $1$ and second smallest eigenvalue $-\gamma = (1+\delta^2)/(1-\delta^2) < 0$ with multiplicity $n-d-3$ and the others eigenvalues $\lambda_{d+1} ,  \lambda_{d} , \dots , \lambda_1$. By Cauchy-Schwarz inequality,
\begin{equation} \label{cauchy1}
    \left( \sum^{d+1}_{i=1} \lambda_i\right)^2 \le (d+1)\sum^{d+1}_{i=1} \lambda_i^2.
\end{equation}
Since $S$ is a Seidel matrix,
\begin{gather}
        \mathrm{tr}( S) = \lambda_{n-1} - (n-d-3)\gamma + \sum^{d+1}_{i=1} \lambda_i  = 0 ; \\
    \mathrm{tr}( S^2) = \lambda_{n-1}^2 + (n-d-3)\gamma^2 + \sum^{d+1}_{i=1} \lambda_i^2 = (n-1)(n-2).
\end{gather}

    Substitution to equation \ref{cauchy1}, we have $ (\lambda_{n-1} - (n-d-3)\gamma)^2 \le (d+1)((n-1)(n - 2) -  \lambda_{n-1}^2 - (n-d-3)\gamma^2)$. After simplified, 
\begin{equation}
\begin{aligned}
    (\gamma^2 - (d+1))(n-1&)^2 - (d+1)(\gamma^2-1)(n-1) \\ &-2\gamma(\lambda_{n-1}+\gamma)(n-1) + (d+2)(\lambda_{n-1} + \gamma)^2 \le 0    
\end{aligned}
\end{equation}
Since $\gamma > 0$ and $\lambda_{n-1} \le -\gamma$,
\begin{equation*} 
\begin{aligned}
    (\gamma^2 - (d+1))(n-1)^2 - (d+1)(\gamma^2-1)(n-1) &\le \\ 2\gamma(\lambda_{n-1}+\gamma)(n-1) - (d+2)(\lambda_{n-1} + \gamma)^2 &\le 0\\
\end{aligned}
\end{equation*}
To conclude, we have
\begin{equation}\label{last}
     (\gamma^2 - (d+1))(n-1) \le  (d+1)(\gamma^2-1).
\end{equation}

In the case of equality, we have  $\gamma = \sqrt{\frac{(n-2)(d+1)}{n-d-2}}$, equality in the Cauchy-Schwarz inequality, which implies that $\lambda_1 = \lambda_2 = \dots = \lambda_{d+1}$ and equality in \ref{last}, which implies that $\lambda_{n-1} = -\gamma$.
Since $\mathrm{tr} ( S) = 0$, we find that $ S$ has spectrum
\[
\left\{ \left[-\sqrt{\dfrac{(n-2)(d+1)}{n-d-2}}\right]^{n-d-2},\left[\sqrt{\dfrac{(n-2)(n-d-2)}{d+1}}\right]^{d+1} \right\}.
\]
\end{proof}

\begin{theorem}[\cite{LEMMENS1973494}] \label{thm:odd}
Let $X$ be an $n$ two-distance points set in $\mathbb R^d$ with distances $1,\delta$. If $n>2d+4$, $-\gamma = (1+\delta^2)/(1-\delta^2)$ is an odd integer.
\end{theorem}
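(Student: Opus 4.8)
The plan is a Lemmens--Seidel-style spectral argument exploiting two facts: the Seidel matrix $S=S(X)$ (of order $n-1$) is a symmetric matrix with entries in $\{0,\pm1\}$, so its characteristic polynomial $\chi_S$ lies in $\mathbb{Z}[x]$; and, by the preceding theorem on the spectrum of $S$, the value $-\gamma=(1+\delta^2)/(1-\delta^2)$ is an eigenvalue of $S$ of multiplicity $m\ge n-d-3$, while the smallest eigenvalue $\lambda_{n-1}$ is simple and the remaining $\le d+1$ eigenvalues $\lambda_{d+1},\dots,\lambda_1$ lie above $-\gamma$.

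First I would show $\gamma\in\mathbb{Z}$. Since $\chi_S$ is monic with integer coefficients, every eigenvalue of $S$ is an algebraic integer, and a rational eigenvalue is an integer; so it suffices to exclude $-\gamma$ irrational. Factor $\chi_S=\prod_i p_i^{e_i}$ into powers of distinct monic irreducibles over $\mathbb{Q}$; if $p$ is the minimal polynomial of $-\gamma$, then $p=p_{i_0}$ for some $i_0$, every root of $p$ is an eigenvalue of $S$, and all such roots occur in $\chi_S$ with the same multiplicity $e_{i_0}=m\ge n-d-3$. Because $n>2d+4$ forces $m\ge n-d-3\ge 2$, and because $\deg p\ge 2$ when $-\gamma$ is irrational, there is a Galois conjugate $-\gamma'\ne-\gamma$ that is also an eigenvalue of $S$ of multiplicity $m\ge 2$; in particular $-\gamma'\ne\lambda_{n-1}$ (which is simple), so $-\gamma'$ must sit among the $\le d+1$ eigenvalues $\lambda_{d+1},\dots,\lambda_1$. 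Hence $m\le d+1$, i.e. $n-d-3\le d+1$, contradicting $n>2d+4$. Therefore $-\gamma$, hence $\gamma>0$, is an integer.

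Next, the parity. Put $B=S+\gamma I$, an integer matrix of order $n-1$. Over $\mathbb{Q}$ its rank is $(n-1)-\dim\ker(S+\gamma I)=(n-1)-m\le (n-1)-(n-d-3)=d+2$, and since all $(d+3)\times(d+3)$ minors of an integer matrix that vanish over $\mathbb{Z}$ vanish mod $2$, we also get $\mathrm{rank}_{\mathbb{F}_2}(B)\le d+2$. Now reduce $B$ mod $2$: the off-diagonal entries of $S$ are $\pm1\equiv 1$ and its diagonal is $0$, so $B\bmod 2$ has all off-diagonal entries $1$ and all diagonal entries $\equiv\gamma\pmod 2$. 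If $\gamma$ were even, then $B\equiv J_{n-1}-I_{n-1}\pmod 2$, whose $\mathbb{F}_2$-rank is at least $n-2$ (its kernel over $\mathbb{F}_2$ consists only of scalar multiples of the all-ones vector, hence has dimension $\le 1$, as $\det(J_m-I_m)=\pm(m-1)$). This gives $n-2\le d+2$, i.e. $n\le d+4<2d+4<n$, a contradiction. Hence $\gamma$ is odd, and since $-\gamma=(1+\delta^2)/(1-\delta^2)<0$ (recall $\delta>1$), $-\gamma$ is a negative odd integer, as claimed; one may further note this is equivalent to $\delta^2=k/(k-1)$ with $\gamma=2k-1$.

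I expect the main obstacle to be the careful bookkeeping rather than any deep idea: in the integrality step, one must make sure the multiplicity bound $m\ge n-d-3$ interacts correctly with "$\lambda_{n-1}$ simple" so that a putative conjugate eigenvalue is genuinely squeezed into the $(d+1)$-element top block of the spectrum; in the parity step, one must justify that reduction mod $2$ cannot increase the rank and pin down $\mathrm{rank}_{\mathbb{F}_2}(J-I)$ exactly. It is also worth verifying the small-dimension edge cases where the inequalities $n-d-3\ge 2$ and $2d+4\ge d+5$ are invoked (these hold once $d\ge 1$, and $d=0$ is vacuous under $n>2d+4$).
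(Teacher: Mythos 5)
Your proposal is correct. The integrality step is essentially the paper's own argument: both of you observe that every Galois conjugate of $-\gamma$ must be an eigenvalue of $S$ with the same multiplicity $\ge n-d-3$, and that the hypothesis $n>2d+4$ leaves no room in the spectrum for a second eigenvalue of that multiplicity (the paper phrases this as $1+2(n-d-3)>n-1$; you squeeze the conjugate into the top block of $\le d+1$ eigenvalues --- same count). The parity step, however, is genuinely different. The paper passes to the integer adjacency matrix $A=\tfrac12(J-I-S)$, picks a vector $\mathbf v_3$ in the $(-\gamma)$-eigenspace of $S$ with $J\mathbf v_3=0$ (possible because that eigenspace has dimension $\ge 2$), and concludes that $\tfrac12(\gamma-1)$ is a rational algebraic integer, hence an integer, so $\gamma$ is odd. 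You instead run a mod-$2$ rank argument on $B=S+\gamma I$: its rational rank is at most $d+2$ by the multiplicity bound, reduction mod $2$ cannot increase rank, yet if $\gamma$ were even then $B\equiv J-I \pmod 2$ would have $\mathbb F_2$-rank at least $n-2>d+2$. Both routes are sound; the paper's is the classical Lemmens--Seidel eigenvalue-of-an-integer-matrix trick and is slightly shorter, while yours avoids choosing an auxiliary eigenvector and makes the obstruction a clean rank inequality, which generalizes readily to other primes and to settings where one only controls the corank of $S+\gamma I$ rather than its full spectrum. (Incidentally, your version also sidesteps the sign slip in the paper's displayed relation $S\mathbf v_i=\gamma\mathbf v_i$, which should read $S\mathbf v_i=-\gamma\mathbf v_i$.)
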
 
    Larman, Rogers, and Seidel \cite{larman1977two} claimed that if $n > 2d + 3$, the distance square $\delta^2$ equals $(k-1)/k$ for some integer $k$, which is equivalent to the condition $-(1+\delta^2)/(1-\delta^2) = 2k-1$. The condition $|X| > 2d + 3$ was improved to $n > 2d + 1$  by Neumaier \cite{neumaier1981distance}
    We adopt this notation and refer to such $k$ as the L.R.S. constant.
\begin{proof}
    Let $ S =  S(X)$ be the Seidel matrix of $X$ with smallest eigenvalue $\lambda_{n-1}$ and second smallest eigenvalue $-\gamma = (1+\delta^2)/(1-\delta^2)$ with multiplicity $n-d-3$. Since $ S$ is a $\{0,\pm1\}$-matrix, the eigenvalues are algebraic integers. Hence, every algebraic conjugate of $\gamma$ is also an eigenvalue of $ S$ with multiplicity $n-d-3$. If $n>2d+4$, since $1 +(n-d-3)+(n-d-3) = n-1+(n-2d-4) > n-1$, $ S$ cannot have more than one eigenvalue with multiplicity $n-d-3$. Therefore $\gamma$ is a rational number.

    Let $A = \frac 12 (J-I-S)$ and $\mathbf v_1, \mathbf v_2$ are eigenvectors of $ S$ satisfying $ S\mathbf v_i = \gamma\mathbf v_i$, where $i=1,2$. Let 
    \[
    \mathbf v_3 =  \mathbf v_1 \cdot \sum_{j=1}^{n-1} \mathbf v_{2j} - \mathbf v_2 \cdot \sum_{j=1}^{n-1} \mathbf v_{1j}.
    \]
    Since $J\mathbf v_3 = 0$, 
    \[
    A\mathbf v_3 = \frac 12 (J-I-S) \mathbf v_3 = \frac 12 (0-1+\gamma) \mathbf v_3.
    \]
    Hence $\frac 12 (-1+\gamma)$ is an eigenvalue of $A$. Since $A$ is integer matrix, $\frac 12 (-1+\gamma)$ is an algebraic integer. But $\frac 12 (-1+\gamma)$ is rational ($\gamma$ is rational), we have $\frac 12 (-1+\gamma)$ is integer, therefore $\gamma$ is an odd integer.
\end{proof}

\begin{definition}
    For an odd integer $\gamma$, define $g_\gamma(d)$ as the maximum cardinality of two-distance sets in $\mathbb R^d$ with distances $1,\delta$ such that $(1+\delta^2)/(1-\delta^2) = -\gamma$.
\end{definition}

\begin{theorem}[\cite{LEMMENS1973494}] \label{LemmenSeidel}Suppose $m, d$ are positive integer with $(2m+1)^2 > d > 3$. Then
\[
g(d) \le \max\left\{g_3(d), g_5(d),\dots, g_{2m+1}(d) , \frac{(d+1)4m(m+1)}{4m^2+4m-d}+1\right\}.
\]
\end{theorem}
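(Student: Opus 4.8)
The plan is to dichotomize on whether the L.R.S.\ constant is small or large. Fix $d$ with $(2m+1)^2 > d > 3$, and let $X$ be a two-distance set in $\mathbb{R}^d$ with $|X| = g(d)$ and squared-distance parameter $-\gamma = (1+\delta^2)/(1-\delta^2)$. If $|X| \le 2d+4$, then in particular $|X|$ does not exceed the last term in the maximum, so it suffices to treat the case $|X| > 2d+4$. In that regime Theorem~\ref{thm:odd} applies, so $\gamma$ is an odd integer, say $\gamma = 2k-1$ for some integer $k \ge 1$; moreover $k \ge 2$ since $\gamma = 1$ forces $\delta = 0$. So I would split according to whether $k \le m$ or $k \ge m+1$, i.e.\ whether $\gamma \le 2m+1$ or $\gamma \ge 2m+3$.

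In the first case, $\gamma \in \{3, 5, \dots, 2m+1\}$, and then by definition of $g_\gamma(d)$ we immediately have $|X| \le g_\gamma(d)$, which is one of the terms inside the maximum. In the second case I would use the relative bound of Theorem~\ref{lem:RelativeBound}, rewritten via \eqref{last} as
\[
    (n-1)\bigl(\gamma^2 - (d+1)\bigr) \le (d+1)(\gamma^2 - 1),
\]
valid whenever $\gamma^2 > d+1$ (which holds here since $\gamma^2 \ge (2m+3)^2 > (2m+1)^2 > d+1$). Solving for $n$ gives
\[
    n \le \frac{(d+1)(\gamma^2-1)}{\gamma^2-(d+1)} + 1,
\]
and the right-hand side, viewed as a function of $t = \gamma^2$, is decreasing in $t$ on the interval $t > d+1$ (its derivative in $t$ has the sign of $-(d+1)\bigl((\gamma^2-(d+1)) + (\gamma^2-1)\bigr) < 0$ after the quotient rule, so the whole expression decreases as $\gamma$ grows). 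Hence among all admissible $\gamma \ge 2m+3$ the bound is largest at $\gamma = 2m+1$... more carefully, since we are in the case $\gamma \ge 2m+3$ the expression is maximized by plugging in the smallest such value; but it is cleaner, and still correct, to note that it is in any case bounded by the value at $\gamma = 2m+1$, namely
\[
    \frac{(d+1)\bigl((2m+1)^2-1\bigr)}{(2m+1)^2-(d+1)} + 1 = \frac{(d+1)\,4m(m+1)}{4m^2+4m-d} + 1,
\]
which is exactly the final term in the maximum. Combining the two cases, $g(d) = |X|$ is at most the maximum of $g_3(d), \dots, g_{2m+1}(d)$ and $\frac{(d+1)4m(m+1)}{4m^2+4m-d}+1$.

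The only delicate point is the monotonicity bookkeeping: one must be sure that the function $\gamma \mapsto \frac{(d+1)(\gamma^2-1)}{\gamma^2-(d+1)}+1$ is genuinely decreasing for $\gamma^2 > d+1$, so that restricting to $\gamma \ge 2m+3$ (or even just $\gamma \ge 2m+1$) gives the stated closed form, and to double-check the denominator stays positive throughout — this uses the hypothesis $(2m+1)^2 > d$, i.e.\ $4m^2+4m-d = (2m+1)^2-1-d \ge 0$, with strict positivity when $d > 3$ is combined with $d \le (2m+1)^2 - 1$. Everything else is a direct appeal to Theorems~\ref{lem:RelativeBound} and~\ref{thm:odd} together with the definition of $g_\gamma(d)$, so I expect no substantive obstacle beyond this elementary calculus estimate and the edge-case check $|X| \le 2d+4$.
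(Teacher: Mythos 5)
Your core argument is the same as the paper's: invoke Theorem~\ref{thm:odd} to make $\gamma=-(1+\delta^2)/(1-\delta^2)$ an odd integer, absorb the case $\gamma\le 2m+1$ into the terms $g_\gamma(d)$, and for $\gamma\ge 2m+3$ use Theorem~\ref{lem:RelativeBound} together with the rewriting
\[
\frac{(d+1)(\gamma^2-1)}{\gamma^2-(d+1)}=d+1+\frac{d(d+1)}{\gamma^2-(d+1)},
\]
which is decreasing in $\gamma^2$ and hence bounded by its value at $\gamma=2m+1$, namely $\frac{(d+1)4m(m+1)}{4m^2+4m-d}$. That part is correct and matches the paper essentially verbatim.

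The genuine gap is your first step: the claim that $|X|\le 2d+4$ forces $|X|$ to be at most the last term of the maximum is false. The last term equals $d+2+\frac{d(d+1)}{4m^2+4m-d}$, which falls below $2d+4$ as soon as $4m(m+1)$ is appreciably larger than $2d$. For a concrete failure take $d=4$, $m=2$ (allowed, since $25>4>3$): the last term is $\frac{5\cdot 24}{20}+1=7$, while $2d+4=12$ and $g(4)\ge\binom{5}{2}=10$. So a maximal set of size strictly between the last term and $2d+4$ is not covered by your case~1, and the proof does not close. The honest situation is that the paper's own proof silently assumes the hypothesis of Theorem~\ref{thm:odd} (it asserts integrality of $\gamma$ with no size check); the clean repair is not your inequality but the observation that for $d\ge 5$ one has $g(d)\ge\binom{d+1}{2}>2d+4$, so the small-cardinality case is vacuous for any extremal set, leaving only $d=4$ to be checked separately. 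A further minor point: $(2m+1)^2>d$ gives only $4m^2+4m-d\ge 0$, not strict positivity (take $d=4m^2+4m$), so the degenerate denominator is an edge case of the statement itself rather than something your bookkeeping resolves.
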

\begin{proof}
    By Theorem \ref{thm:odd}, $(1+\delta^2)/(1-\delta^2) = \gamma$ is an odd integer. For a positive number $m$, if $\gamma > 2m+1$, since $\gamma > (2m+1)^2 > d$, by Lemma \ref{lem:RelativeBound}, we have
    \begin{equation*}
    \begin{aligned}
        g_{\gamma}(d)-1 \le \frac{(d+1)\left(\gamma^2 - 1\right)}{\gamma^2-(d+1)} &= d+1 +\frac{d(d+1)}{\gamma^2-(d+1)}\\
    &\le d+1 +\frac{d(d+1)}{(2m+1)^2-(d+1)} = \frac{(d+1)4m(m+1)}{4m^2+4m-d}.
    \end{aligned}
    \end{equation*}
\end{proof}

\newpage

\begin{table}[ht]
    \centering
    \begin{tabular}{c|cccc|c}
         \diagbox{dimension}{upper bound}{ $\ k$} & 2 & 3 & 4 & 5 & $g(d)$\\  \hline 
         $5$ & $17$ & 8  & 7  & 7  & 16 \\
          6  & $29$ & 10 & 9  & 8  & 27 \\  
         $7$ & $65$ & 12 & 10 & 9  & 29 \\ \hline
         $8$  & & $14$  &  11 & 11 & 45\\ 
          9   & & $17$  &  13 & 12 & 45- \\
         $10$ & & $19$  &  14 & 13 & 55- \\
         $11$ & & $23$  &  16 & 14 & 66- \\
         $12$ & & $27$  &  18 & 16 & 78- \\
         $13$ & & $31$  &  20 & 17 & 91- \\
         $14$ & & $37$  &  22 & 19 & 105- \\
         $15$ & & $43$  &  24 & 20 & 120- \\
         $16$ & & $52$  &  26 & 22 & 136- \\
         $17$ & & $62$  &  28 & 23 & 153- \\
         $18$ & & $77$  &  31 & 25 & 171- \\
         $19$ & & $97$  &  34 & 27 & 190- \\
         $20$ & & $127$ &  37 & 29 & 210- \\
         $21$ & & $177$ &  40 & 30 & 231- \\
         $22$ & & $277$ &  43 & 32 & 253- \\
         $23$ & & $577$ &  47 & 34 & 276- \\ \hline
         $24$ & & & $51$  & 36 & 300- \\
          25  & & & $55$  & 38 & 325- \\ 
         $26$ & & & $59$  & 41 & 351- \\
         $27$ & & & $65$  & 43 & 378- \\    
         $28$ & & & $70$  & 45 & 406- \\
         $29$ & & & $76$  & 48 & 435- \\
         $30$ & & & $83$  & 50 & 465- \\
         $31$ & & & $91$  & 53 & 496- \\
         $32$ & & & $100$ & 56 & 528- \\
         $33$ & & & $109$ & 58 & 561- \\
    \end{tabular}
    \caption{Upper Bounds on two-distance sets in $d$-dimensional Euclidean space given by theorem \ref{lem:RelativeBound}. For each dimension $d$, the upper bound applies for the L.R.S. constant $k = 2, 3, 4, 5$, where $-(1+\delta^2)/(1-\delta^2) = 2k-1$.}
    \label{tab:bounds}
\end{table}


\begin{example}
In Table \ref{tab:bounds}, for $\mathbb{R}^{15}$, the upper bound is 43 points for distance ratios $\delta \geq \frac{\sqrt{3}}{\sqrt{2}}$. However, we have constructed a configuration with 120 points for distance ratio $\delta = \sqrt{2}$ (the mid-point case). This indicates that the maximum two-distance set is achievable only for the specific distance ratio $\sqrt{2}$.
\end{example}

\section*{4. spherical two distance sets}
\stepcounter{counter}

Having established Bounds on two-distance sets in general Euclidean space, we now turn our attention to the special case of spherical two-distance sets. 

If a two-distance set $S$ lies in the unit sphere $\mathbb S^{d-1}$, then $S$ is called spherical two-distance set. In other words, $S$ is a set of unit vectors, there exist two distinct real numbers $a$ and $b$ with $-1 \le a < b < 1$, and inner products of distinct vectors in $S$ are either $a$ or $b$.

\begin{definition}
    For a spherical two-distance set with points $v_1, v_2 , \dots , v_n$ in $\mathbb S^{d-1}$, the Gram matrix is defined by $i, j = 1, \dots, n$ 
    \[
    G = (\langle v_i , v_j \rangle)_{ij}.
    \]
\end{definition}

\begin{definition} Let $X$ be an $n$ points spherical two-distance set in $\mathbb S^{d-1}$ with inner products $a,b$ ($a<b$). Let $G = G(X)$ be the Gram matrix. Define the Seidel matrix $S$ by
\[
S = \dfrac{(G-I)-\dfrac{a+b}{2}\cdot J + \dfrac{a+b}{2}\cdot I}{\dfrac{b-a}{2}}.
\]
\end{definition}

\begin{lemma}
    The matrix S defined above is indeed a Seidel matrix.
\end{lemma} 
\begin{proof}  
    First, we examine the diagonal elements. Since the diagonal of $G$ are all $1$, the the diagonal of $S$ are $(1-1-\frac{a+b}{2}+\frac{a+b}{2})/\frac{b-a}{2} = 0$.
    
    Next, we consider the non-diagonal elements. Since the non-diagonal elements of $G$ are either $a$ or $b$. If the element is $a$, the same position of $S$ will be $(a-\frac{a+b}{2})/\frac{b-a}{2} = \frac{a-b}{2}/\frac{b-a}{2} =-1$; If the element is $b$, the same position of $S$ will be $(b-\frac{a+b}{2})/\frac{b-a}{2} = \frac{b-a}{2}/\frac{b-a}{2} =1$. 
    
    Therefore, $S$ has zero diagonal elements and non-diagonal elements equal to $\pm 1$. Since $G$, $J$, and $I$ are all symmetric matrices, their linear combination $S$ is also symmetric. This confirms that $S$ has the structure of a Seidel matrix.
\end{proof}

Note that the Seidel matrix of spherical two-distance set is order $n$.

\begin{theorem} \label{thm:eigenvalues}
    Let $ S = \frac{2G - (a+b)\cdot J  + (a+b-2) \cdot I } {b-a}$.
    If $a+b \ge 0$, $S$ has the smallest eigenvalue $\lambda_{n}$ with multiplicity $1$ and second smallest eigenvalue $\frac{a+b}{b-a}$ with the multiplicity at least $n-d-1$; If $a+b < 0$, $S$ has the smallest eigenvalue $\frac{a+b-2}{b-a}$ with the multiplicity at least $n-d-1$.
\end{theorem}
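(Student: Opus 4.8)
The plan is to mimic the argument of Theorem 2.6 (the Euclidean case), replacing the Cayley–Menger matrix $M$ with the Gram matrix $G$ and the auxiliary matrix $D$ with the rank-one correction $-\tfrac{a+b}{2}J$. I would write $S = \tfrac{W - (a+b-2)I}{b-a}$ where $W = 2G - (a+b)J$, noting $b-a>0$, so the spectrum of $S$ is an order-preserving affine image of the spectrum of $W$, and the zero eigenvalues of $W$ become $\tfrac{-(a+b-2)}{b-a} = \tfrac{a+b-2}{b-a}$ in $S$. Thus the whole problem reduces to locating the eigenvalues of $W = 2G + R$ with $R = -(a+b)J$.

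First I would record the spectral data of the two summands. Since the $v_i$ are unit vectors in $\mathbb R^d$, $G$ is positive semidefinite of rank at most $d$, so in descending order $\lambda_1(2G)\ge\cdots\ge\lambda_d(2G)\ge 0$ and $\lambda_{d+1}(2G)=\cdots=\lambda_n(2G)=0$. The matrix $R=-(a+b)J$ has rank one: its nonzero eigenvalue is $-(a+b)n$, with sign opposite to that of $a+b$, and all other eigenvalues are $0$. Then I would feed these into Weyl's inequality (Theorem 2.5) exactly as in the proof of Theorem 2.6. In the case $a+b\ge 0$, $R$ is negative semidefinite, so $\lambda_i(W)\le \lambda_i(2G)$ for all $i$ and $\lambda_i(W)\ge\lambda_{i+1}(2G)$ (using $\lambda_n(R)\le 0$ and the one-step-shifted bound $\lambda_i(W)\ge\lambda_{i+1}(2G)+\lambda_{n-1}(R)=\lambda_{i+1}(2G)$ since $R$ has only one negative eigenvalue). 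Feeding $i=d+1$ gives $\lambda_{d+1}(W)\le\lambda_{d+1}(2G)=0$ and $i=d+2,\dots,n-1$ gives $0\le\lambda_i(W)\le 0$, i.e. $\lambda_i(W)=0$ for those $n-d-2$ indices; together with the forced $\lambda_n(W)\le 0$ this yields a zero eigenvalue of multiplicity at least $n-d-2$ plus a bottom eigenvalue $\le 0$. Translating back to $S$: the second-smallest eigenvalue is $\tfrac{a+b}{b-a}$ with multiplicity at least $n-d-1$ once one checks $\tfrac{a+b}{b-a}\ge 0$ sits strictly above the bottom value $\lambda_n(S)$; and I must argue the bottom eigenvalue has multiplicity $1$, which follows because $2G$ contributes at most $d$ positive eigenvalues and the rank-one negative perturbation can push at most one eigenvalue strictly below the zero level — more precisely, if $\lambda_{n-1}(W)<0$ as well, Weyl would force $\lambda_{n-1}(2G)<0$, contradicting positive semidefiniteness. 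In the case $a+b<0$, $R=-(a+b)J$ is positive semidefinite, so $\lambda_i(W)\ge\lambda_i(2G)\ge 0$ for all $i$, and the shifted upper Weyl bound $\lambda_i(W)\le\lambda_{i+1}(2G)+\lambda_1(R)$ applied at $i=d+1$ still allows positivity, but applying $\lambda_n(W)\ge\lambda_n(2G)=0$ together with $\lambda_n(W)\le\lambda_n(2G)+\lambda_1(R)$ — hmm, more carefully: here one wants the smallest eigenvalue, so use $\lambda_i(W)\le\lambda_{i-1}(2G)+\lambda_2(R)=\lambda_{i-1}(2G)$ for $i\ge 2$ (since $\lambda_2(R)=0$), which at $i=d+2,\dots,n$ gives $\lambda_i(W)\le 0$, hence $=0$; so $W$ has zero as an eigenvalue of multiplicity at least $n-d-1$, and in $S$ this becomes $\tfrac{a+b-2}{b-a}$, which is $<0$, with the stated multiplicity, and one checks nothing lies strictly below it.

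The main obstacle I anticipate is the bookkeeping of the shifted Weyl inequalities: getting the index ranges right so that the one nonzero eigenvalue of the rank-one perturbation $R$ is "absorbed" correctly and one genuinely pins down $n-d-1$ (resp.\ $n-d-2$ plus a simple bottom eigenvalue) coincident eigenvalues, rather than just inequalities in one direction. The sign of $a+b$ is exactly what determines whether the rank-one piece is positive or negative semidefinite, hence whether the extra eigenvalue escapes upward (harmless, in the $a+b<0$ case, for a statement about the \emph{smallest} eigenvalue) or downward (producing the simple smallest eigenvalue in the $a+b\ge 0$ case). A secondary point to be careful about is the strictness needed to separate $\tfrac{a+b}{b-a}$ from $\lambda_n(S)$ in the first case; this should come from the fact that if they coincided the multiplicity count $1+(n-d-1)+d = n$ would already be saturated consistently, so in fact no contradiction arises there and the statement only claims "second smallest," which is fine — the real content is the multiplicity lower bound, which the Weyl sandwich delivers directly.
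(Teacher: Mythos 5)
Your proposal follows essentially the same route as the paper: decompose $W = 2G - (a+b)J$, use the positive semidefiniteness and rank $\le d$ of $2G$ together with the rank-one structure of $-(a+b)J$, apply Weyl's inequality separately in the two sign cases, and transport the spectrum of $W$ to $S$ by the order-preserving affine map. Two small corrections to your bookkeeping: in the $a+b\ge 0$ case the lower Weyl bound $\lambda_i(W)\ge\lambda_{i+1}(2G)+\lambda_{n-1}(R)=0$ already applies at $i=d+1$, so the sandwich pins down $n-d-1$ (not $n-d-2$) zero eigenvalues of $W$, which is exactly the multiplicity you assert at the end; and the second-smallest eigenvalue of $S$ in that case is $\frac{a+b-2}{b-a}$ (as your own computation of the image of $0$ under the affine map gives, and as the paper's proof and its use in the subsequent bound confirm), not the $\frac{a+b}{b-a}$ printed in the theorem statement, which appears to be a typo in the statement itself.
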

\begin{proof}

Consider the eigenvalue of $N = 2G -(a+b)\cdot J$. Let $\lambda_1 \ge \lambda_2 \ge \dots \ge \lambda_n$ denote the eigenvalues of matrices $2G$, $-(a+b)\cdot J$, and $N$ in descending order, respectively.

If  $a+b \ge 0$, Since $2G$ is a positive semidefinite matrix with rank at most $n-d$, we have $\lambda_{n}(2G) =\lambda_{n-1}(2G) = \dots =\lambda_{d+1}(2G) = 0$ and 
\[
\lambda_{n}(-(a+b)\cdot J) =-(a+b)\cdot n ,\ \lambda_{n-1}(-(a+b)\cdot J) = \dots =\lambda_{1}(-(a+b)\cdot J) = 0 
\]
By Weyl's inequality, 
\[
 \lambda_{n}(N) \le 0,
\]
\[
0 \le \lambda_{n-i}(N) \le 0,
\]
\[
 0 \le \lambda_{d}(N),
\]
\[
0 < \lambda_{n-j}(N)
\]
for $i= 1,\dots,n-d-1 , j = n-d+1 , \dots , n-1$.

Therefore, the matrix $N = 2G -(a+b)\cdot J$ has the following eigenvalue structure: $\lambda_n(N) \leq 0$, eigenvalues $\lambda_{n-1}(N) = \cdots = \lambda_{d+1}(N) = 0$ with multiplicity $n-d-1$, and positive eigenvalues $\lambda_{d}(N), \ldots, \lambda_1(N) \ge 0$.

If  $a+b < 0$, we know that $\lambda_{n}(2G) =\lambda_{n-1}(2G) = \dots =\lambda_{d+1}(2G) = 0$ and 
\[ 
\lambda_{n}(-(a+b)\cdot J) = \dots =\lambda_{2}(-(a+b)\cdot J) = 0 , \ \lambda_{1}(-(a+b)\cdot J) = -(a+b)\cdot n
\]
By Weyl's inequality, 
\[
0 \le \lambda_{n-i}(N) \le 0,
\]
\[
 0 \le \lambda_{d+1}(N),
\]
\[
0 < \lambda_{n-j}(N)
\]
for $i= 0,\dots,n-d-2 , j = n-d , \dots , n-1$.

Therefore, the matrix $N = 2G -(a+b)\cdot J$ has the following eigenvalue structure: $\lambda_{n}(N) = \cdots = \lambda_{d+2}(N) = 0$ with multiplicity $n-d-1$, and positive eigenvalues $\lambda_{d+1}(N), \ldots, \lambda_1(N) \ge 0$.

Since ${S} = \frac{N+(a+b-2)\cdot I}{a-b}$ and $b-a > 0$, the eigenvalues of ${S}$ are given by:
\[
\lambda_i({S}) = \frac{\lambda_i(N) +(a+b-2)}{b-a}
\]

This transformation preserves the ordering and multiplicities. The zero eigenvalues of $N$ become $\frac{a+b-2}{b-a}$ in ${S}$ with multiplicity at least $n-d-1$. 

The smallest eigenvalue $\lambda_n(N) \leq 0$ becomes the smallest eigenvalue $\lambda_n({S})$ of ${S}$, completing the proof.
\end{proof}

Since the spectral structure differs depending on whether $a+b \geq 0$ or $a+b < 0$, we analyze these two cases separately in the following sections.

\section*{5. Upper bound of spherical two-distance sets when $a+b \ge 0$}
\stepcounter{counter}

\begin{theorem} \label{lem:SpericalRelativeBound}
Let $X$ be an $n$ points spherical two-distance set in $\mathbb S^{d-1}$ with inner products $a,b$.  If $a+b \ge 0$, then 
\begin{equation}
    n \le \dfrac{d\left(\left(\dfrac{a+b-2}{b-a}\right)^2-1\right)}{\left(\dfrac{a+b-2}{b-a}\right)^2-d}.
\end{equation} 
\end{theorem}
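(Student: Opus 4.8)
The plan is to transcribe the proof of Theorem~\ref{lem:RelativeBound}, replacing the order-$(n-1)$ Cayley--Menger Seidel matrix by the order-$n$ Gram Seidel matrix constructed in Section~4. Set $\gamma := \frac{2-a-b}{b-a}$; since $a,b<1$ we have $\gamma>0$ and $-\gamma = \frac{a+b-2}{b-a}$, so $\gamma^2 = \bigl(\frac{a+b-2}{b-a}\bigr)^2$. By Theorem~\ref{thm:eigenvalues}, because $a+b\ge 0$ the Seidel matrix $S = S(X)$ has $-\gamma$ as an eigenvalue of multiplicity at least $n-d-1$, a smallest eigenvalue $\lambda_n$ (necessarily $\lambda_n \le -\gamma$), and $d$ remaining eigenvalues $\lambda_1 \ge \cdots \ge \lambda_d$; these three groups account for all $n$ eigenvalues.

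Next I would use the two trace identities for the $n\times n$ Seidel matrix $S$, namely $\mathrm{tr}(S) = \lambda_n - (n-d-1)\gamma + \sum_{i=1}^d \lambda_i = 0$ and $\mathrm{tr}(S^2) = \lambda_n^2 + (n-d-1)\gamma^2 + \sum_{i=1}^d \lambda_i^2 = n(n-1)$, together with the Cauchy--Schwarz inequality $\bigl(\sum_{i=1}^d \lambda_i\bigr)^2 \le d\sum_{i=1}^d \lambda_i^2$. Solving the trace identities for $\sum_{i=1}^d \lambda_i$ and $\sum_{i=1}^d \lambda_i^2$, substituting into Cauchy--Schwarz, and writing $t := \lambda_n + \gamma \le 0$, the inequality should collapse --- the coefficient of $\gamma^2$ simplifying to $n(n-d)$ and the coefficient of $\gamma t$ to $-2n$, just as the analogous coefficients did in Section~3 --- to
\[
n\bigl[\, n(\gamma^2 - d) - d(\gamma^2 - 1) \,\bigr] \;\le\; 2n\gamma t - (d+1)t^2 .
\]

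Since $\gamma>0$, $n>0$ and $t\le 0$, the right-hand side is $\le 0$, hence $n(\gamma^2 - d) \le d(\gamma^2 - 1)$, and dividing by $\gamma^2 - d > 0$ yields the claimed bound (the statement being vacuous when $\gamma^2 \le d$). As in Theorem~\ref{lem:RelativeBound} I would also record the equality discussion: equality throughout forces $t = 0$ (so $\lambda_n = -\gamma$) and $\lambda_1 = \cdots = \lambda_d$, whence $S$ has the two-eigenvalue spectrum $\bigl\{\,[-\sqrt{d(n-1)/(n-d)}\,]^{\,n-d},\ [\sqrt{(n-d)(n-1)/d}\,]^{\,d}\,\bigr\}$, which is exactly the situation linked to ETFs in Section~7. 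The only place that genuinely needs care is the bookkeeping in the middle step: one must verify that after clearing denominators the polynomial in $n$ is precisely $n[n(\gamma^2-d)-d(\gamma^2-1)]$ and that every surviving term carries the factor $t = \lambda_n+\gamma$ with the sign required to discard it; once that algebra is confirmed, the rest is an immediate repetition of the Euclidean argument.
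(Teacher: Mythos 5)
Your proposal is correct and follows exactly the route the paper intends: the paper's own proof of this theorem consists of the single line ``the proof follows similarly to that of Theorem~\ref{lem:RelativeBound}'' with $-\gamma=(a+b-2)/(b-a)$ of multiplicity $n-d-1$, and your transcription (trace identities $\mathrm{tr}(S)=0$, $\mathrm{tr}(S^2)=n(n-1)$, Cauchy--Schwarz over the top $d$ eigenvalues, and discarding the terms carrying $t=\lambda_n+\gamma\le 0$) fills in precisely the omitted bookkeeping, with the coefficients $n(n-d)$ and $-2n$ checking out. The equality analysis and resulting two-eigenvalue spectrum also match the paper.
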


\begin{proof}

The proof follows similarly to that of Theorem \ref{lem:RelativeBound}. Let $-\gamma = (a+b-2)/(b-a)$ with multiplicity $n-d-1$. We have
\begin{equation} \label{eq:SphericalLast}
    (\gamma^2 - d)n \le  d(\gamma^2-1).
\end{equation}

In the case of equality, we have  $\gamma = \sqrt{\frac{d(n-1)}{n-d}}$, equality in the Cauchy-Schwarz inequality , which implies that $\lambda_1 = \lambda_2 = \dots = \lambda_{d+1}$ and equality in \ref{eq:SphericalLast}, which implies that $\lambda_n = -\gamma$.
Since $\mathrm{tr} ( S) = 0$, we find that $S$ has spectrum
\[
\left\{ \left[-\sqrt{\dfrac{d(n-1)}{n-d}}\right]^{n-d},\left[\sqrt{\dfrac{(n-1)(n-d)}{d}}\right]^{d} \right\}.
\]
\end{proof}

When the equality occurs, the seidel matrix has two distinct eigenvalues, which correpsond to the equiangular line system in $\mathbb R^d$ with $n$ points and common angle $\sqrt{\frac{n-d}{d(n-1)}}$. 

\begin{theorem}[\cite{LEMMENS1973494}] \label{thm:SphericalOdd}
Let $X$ be an $n$ points spherical two-distance set in $\mathbb S^{d-1}$ with inner products $a,b$. If $n>2d+2$ and $a+b \ge 0$, $(a+b-2)/(b-a)$ is an odd integer.
\end{theorem}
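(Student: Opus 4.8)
The plan is to mirror the proof of Theorem~\ref{thm:odd} (the Euclidean case) in the spherical setting, using the spectral data from Theorem~\ref{thm:eigenvalues}. Write $-\gamma = (a+b-2)/(b-a)$; since $a,b<1$ and $a+b\ge 0$ we have $0<2-a-b\le 2$ and $b-a>0$, so $\gamma>0$. By Theorem~\ref{thm:eigenvalues}, when $a+b\ge 0$ the order-$n$ Seidel matrix $S=S(X)$ has smallest eigenvalue $\lambda_n$ with multiplicity $1$ and second-smallest eigenvalue $-\gamma$ with multiplicity at least $n-d-1$. The goal is to show that $\gamma$ is an odd positive integer, equivalently that $(a+b-2)/(b-a)$ is an odd integer.

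First I would establish rationality of $\gamma$. Since $S$ is a symmetric $\{0,\pm1\}$-matrix, its characteristic polynomial lies in $\mathbb{Z}[x]$, so all eigenvalues are algebraic integers and, the characteristic polynomial being Galois-stable, every algebraic conjugate of $\gamma$ is again an eigenvalue of $S$ with the same multiplicity as $\gamma$, hence multiplicity at least $n-d-1$. Because $n>2d+2$ we have $n-d-1>d+1\ge 2>1$, so the eigenvalue $\lambda_n$, which has multiplicity exactly $1$, cannot be a conjugate of $-\gamma$. Consequently, if $\gamma$ were irrational it would possess at least two distinct conjugates, each contributing at least $n-d-1$ eigenvalues counted with multiplicity; together with $\lambda_n$ this would account for at least $2(n-d-1)+1 = 2n-2d-1$ eigenvalues of the order-$n$ matrix $S$, and $2n-2d-1>n$ whenever $n>2d+1$ — a contradiction. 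Hence $\gamma\in\mathbb{Q}$, and, being an algebraic integer, $\gamma\in\mathbb{Z}$; thus $(a+b-2)/(b-a)\in\mathbb{Z}$.

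Next I would extract the parity from the matrix $A=\tfrac12(J-I-S)$, which is the $\{0,1\}$-valued adjacency matrix of the graph whose Seidel matrix is $S$ and so has integer entries. The $(-\gamma)$-eigenspace of $S$ has dimension at least $n-d-1\ge 2$, so its intersection with $\mathbf{1}^{\perp}$ (one linear constraint, where $\mathbf 1$ is the all-ones vector) has dimension at least $n-d-2\ge 1$; I pick a nonzero vector $\mathbf{v}$ there — concretely, following the paper, $\mathbf v=\mathbf v_1(\mathbf 1^{\mathsf T}\mathbf v_2)-\mathbf v_2(\mathbf 1^{\mathsf T}\mathbf v_1)$ for two independent $(-\gamma)$-eigenvectors $\mathbf v_1,\mathbf v_2$ (reducing to the case $\mathbf v_1\in\mathbf 1^{\perp}$ when that vector vanishes). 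Then $J\mathbf v=\mathbf 1(\mathbf 1^{\mathsf T}\mathbf v)=0$ and $S\mathbf v=-\gamma\mathbf v$, so $A\mathbf v=\tfrac12(J-I-S)\mathbf v=\tfrac12(\gamma-1)\mathbf v$. Thus $(\gamma-1)/2$ is an eigenvalue of the integer matrix $A$, hence an algebraic integer; since $\gamma\in\mathbb{Z}$ it is also rational, so $(\gamma-1)/2\in\mathbb{Z}$, i.e. $\gamma$ is odd. This yields the claim.

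I expect the only delicate point to be the multiplicity bookkeeping in the rationality step: one must be certain the simple eigenvalue $\lambda_n$ is not swept up among the conjugates of $-\gamma$ (settled by the multiplicity mismatch $1<n-d-1$, which is exactly where the hypothesis $n>2d+2$ is used) and that the intersection of the $(-\gamma)$-eigenspace with $\mathbf{1}^{\perp}$ is genuinely nontrivial (again using $n>2d+2$). The remaining steps — the entries of $A$, the action of $A$ on $\mathbf{v}$, and "a rational algebraic integer is an integer" — are routine and identical in spirit to Theorem~\ref{thm:odd}.
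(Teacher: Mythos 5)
Your proposal is correct and takes essentially the same route as the paper, whose proof of this statement consists only of the remark that it ``follows similarly to that of Theorem~\ref{thm:odd}''; you have carried out exactly that adaptation, with the bookkeeping correctly adjusted for the order-$n$ spherical Seidel matrix (multiplicity $n-d-1$ for $-\gamma$, the counting $2(n-d-1)>n$ under $n>2d+2$, and the eigenvector of $A=\tfrac12(J-I-S)$ in $\mathbf 1^{\perp}$). Your sign conventions are in fact cleaner than those in the paper's proof of Theorem~\ref{thm:odd}.
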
 

    Larman, Rogers, and Seidel \cite{larman1977two} claimed that the distance square $\delta^2$ equals to $(k-1)/k$ for some integer $k$, which is the same result if we let $-(a+b-2)/(b-a) = 2k-1$.
    
\begin{proof}

    The proof follows similarly to that of Theorem \ref{thm:odd}.
\end{proof}

\begin{definition}
    For an odd integer $\gamma$, define $M^+_\gamma(d)$ as the maximum cardinality of spherical two-distance sets in $\mathbb S^{d-1}$ such that $-(a+b-2)/(b-a) = \gamma$ and $a+b\ge0$.
\end{definition}


\begin{theorem}[\cite{LEMMENS1973494}] Suppose $m, d$ are positive integer with $(2m+1)^2 > d > 3$. Then
\[
M^+(d) \le \max\left\{M^+_3(d), M^+_5(d),\dots, M^+_{2m+1}(d) , \frac{4dm(m+1)}{4m^2+4m-d}\right\}.
\]
\end{theorem}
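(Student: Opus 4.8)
The plan is to mirror exactly the argument used in Theorem~\ref{LemmenSeidel}, transposing every step from the Euclidean setting to the spherical setting with $a+b\ge 0$. First I would invoke Theorem~\ref{thm:SphericalOdd}: whenever the point count is large enough (here the relevant regime is $n$ exceeding a linear-in-$d$ threshold), the parameter $\gamma = -(a+b-2)/(b-a)$ must be an odd integer. So it suffices to bound $M^+_\gamma(d)$ for each admissible odd $\gamma$ and take the maximum over the small values $\gamma = 3,5,\dots,2m+1$ separately, while handling the tail $\gamma \ge 2m+1$ uniformly.

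For the tail, I would feed the hypothesis $\gamma \ge 2m+1$ together with $(2m+1)^2 > d$ into Theorem~\ref{lem:SpericalRelativeBound}, which gives
\begin{equation*}
n \le \frac{d(\gamma^2-1)}{\gamma^2-d}.
\end{equation*}
The key algebraic manipulation is the partial-fraction rewrite
\begin{equation*}
\frac{d(\gamma^2-1)}{\gamma^2-d} = d + \frac{d(d-1)}{\gamma^2-d},
\end{equation*}
which is decreasing in $\gamma$ for $\gamma^2 > d$; substituting the worst case $\gamma = 2m+1$ yields
\begin{equation*}
n \le d + \frac{d(d-1)}{(2m+1)^2-d} = \frac{d\bigl((2m+1)^2-1\bigr)}{(2m+1)^2-d} = \frac{4dm(m+1)}{4m^2+4m-d},
\end{equation*}
using $(2m+1)^2-1 = 4m(m+1)$ and $(2m+1)^2-d = 4m^2+4m-d$. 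Combining the tail bound with the finitely many values $M^+_3(d),\dots,M^+_{2m+1}(d)$ gives the claimed maximum.

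The only genuine subtlety — and the step I would be most careful about — is checking that the integrality hypothesis of Theorem~\ref{thm:SphericalOdd} actually applies across the whole range under consideration, i.e.\ that any spherical two-distance set with $a+b\ge 0$ and cardinality close to the bound being proved satisfies $n > 2d+2$, so that $\gamma$ is forced to be an odd integer rather than merely a real quantity. For small $n$ the claimed bound should be verified to hold trivially (e.g.\ because $M^+(d)$ for small configurations is dominated by the right-hand side), so that no separate small-$n$ analysis escapes the stated maximum. Everything else is the same Cauchy--Schwarz-plus-trace bookkeeping already carried out twice in the paper, so I expect no new obstacle beyond this boundary-case check.
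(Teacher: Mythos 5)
Your proposal is correct and follows essentially the same route as the paper's own proof: invoke the integrality of $\gamma$ from Theorem \ref{thm:SphericalOdd}, split off the finitely many odd values $\gamma \le 2m+1$, and bound the tail via the partial-fraction rewrite $d + \frac{d(d-1)}{\gamma^2-d}$ being decreasing in $\gamma$. Your extra care about verifying $n>2d+2$ before applying the integrality result is a reasonable refinement the paper itself glosses over, but it does not change the argument.
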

\begin{proof}
    By Theorem \ref{thm:SphericalOdd}, $-(a+b-2)/(b-a) = \gamma$ is an odd integer. For a positive number $m$, if $\gamma > 2m+1$, since $\gamma > (2m+1)^2 > d$, by Theorem \ref{lem:SpericalRelativeBound}, we have
    \begin{equation*}
    \begin{aligned}
        M^+_{\gamma}(d) \le \frac{d\left(\gamma^2 - 1\right)}{\gamma^2-d} &= d +\frac{d(d-1)}{\gamma^2-d}\\
    &\le d +\frac{d(d-1)}{(2m+1)^2-d} = \frac{4dm(m+1)}{(2m+1)^2-d}.
    \end{aligned}
    \end{equation*}
\end{proof}

\newpage
\begin{table}[!ht]
    \centering
    \begin{tabular}{c|cccc|c}
         \diagbox{dimension}{upper bound}{ $\ k$} & 2 & 3 & 4 & 5 & $M^+(d)$\\  \hline     
         $5$ & $10$ & 6  & 5  & 5  & 16 \\
          6  & $16$ & 7 & 6  & 6  & 27 \\  
         $7$ & $28$ & 9 & 8 & 7  & 28 \\ 
         $8$ & $64$ & $11$  & 9 & 8 & 36\\ \hline
          9   & & $13$  &  10 & 10 & 45 \\
         $10$ & & $16$  &  12 & 11 & 55 \\
         $11$ & & $18$  &  13 & 12 & 66 \\
         $12$ & & $22$  &  15 & 13 & 78 \\
         $13$ & & $26$  &  17 & 15 & 91 \\
         $14$ & & $30$  &  19 & 16 & 105 \\
         $15$ & & $36$  &  21 & 18 & 120 \\
         $16$ & & $42$  &  23 & 19 & 136 \\
         $17$ & & $51$  &  25 & 21 & 153 \\
         $18$ & & $61$  &  27 & 22 & 171 \\
         $19$ & & $76$  &  30 & 24 & 190 \\
         $20$ & & $96$  &  33 & 26 & 210 \\
         $21$ & & $126$ &  36 & 28 & 231 \\
         $22$ & & $176$ &  39 & 29 & 275 \\
         $23$ & & $276$ &  42 & 31 & 276 \\ 
         $24$ & & $576$ &  46 & 33 & 300 \\ \hline
          25  & & & $50$  & 35 & 325 \\ 
         $26$ & & & $54$  & 37 & 351 \\
         $27$ & & & $58$  & 40 & 378 \\    
         $28$ & & & $64$  & 45 & 406 \\
         $29$ & & & $69$  & 48 & 435 \\
         $30$ & & & $75$  & 50 & 465 \\
         $31$ & & & $82$  & 53 & 496 \\
         $32$ & & & $90$  & 56 & 528 \\
         $33$ & & & $99$  & 58 & 561 \\
    \end{tabular}
    \caption{Upper bounds on spherical two-distance sets in $d$-dimensional Euclidean space with $a + b \geq 0$, as given by Theorem \ref{lem:SpericalRelativeBound}. For each dimension $d$, the upper bound applies for the L.R.S. constant $k = 2, 3, 4, 5$, where $-(a+b-2)/(b-a) = 2k-1$.}
    \label{tab:boundsSpherical}
\end{table}
\begin{example}
In Table \ref{tab:boundsSpherical}, for $\mathbb{R}^{15}$, the upper bound is 36 points for $-\frac{a+b-2}{b-a} \ge 2\times 3-1 = 5$. However, we have constructed a configuration with 120 points for $a = 0, b = \frac 12$ (the mid-point case). 
In the case, $-\frac{a+b-2}{b-a} = 3$. This indicates that the maximum two-distance set is achievable only for the specific inner products with $-\frac{a+b-2}{b-a} = 3$.
\end{example}

\section*{6. Upper bound of spherical two-distance sets when $a+b < 0$}
\stepcounter{counter}

\begin{lemma}\label{lem:Seidel}
    There exists a $n\times n$ Seidel matrix has smallest eigenvalue $\lambda_0 < -1$ with multiplicity $n-d$ if and only if there exists a equiangular line system with $n$ points in $\mathbb R^{d}$.
\end{lemma}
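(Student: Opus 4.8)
The plan is to establish this equivalence through the standard dictionary between Seidel matrices and systems of equiangular lines, carefully tracking the multiplicity of the extreme eigenvalue. Recall that a system of $n$ equiangular lines in $\mathbb{R}^d$ with common angle $\arccos(1/\lambda_0)$ (where $|\lambda_0|>1$) corresponds to a set of unit spanning vectors $u_1,\dots,u_n$ with $\langle u_i,u_j\rangle = \pm 1/\lambda_0$ for $i\neq j$; writing $G$ for the Gram matrix, the matrix $S := \lambda_0(G - I)$ has zero diagonal and $\pm 1$ off-diagonal entries, hence is a Seidel matrix, and $\lambda_0 I + S = \lambda_0 G \succeq 0$ has rank equal to $\dim\mathrm{span}\{u_i\} \le d$. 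Thus $-\lambda_0$ is an eigenvalue of $S$ with multiplicity $n - \mathrm{rank}(G) \ge n-d$, and it is the smallest eigenvalue since $G$ is positive semidefinite. This is the ``only if'' direction once we reconcile signs: given an equiangular line system in $\mathbb{R}^d$, we obtain a Seidel matrix whose smallest eigenvalue $\lambda_0' = -\lambda_0 < -1$ has multiplicity $n - \mathrm{rank}(G)$; if this multiplicity exceeds $n-d$ we may delete vectors (or just note $\mathrm{rank}(G)\le d$ gives multiplicity $\ge n-d$, and the statement as phrased asks for existence of \emph{some} such Seidel matrix, so $\ge n-d$ suffices, or one passes to a full-rank subsystem spanning the same space — here I would simply invoke $\mathrm{rank}(G)\le d$).

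Conversely, suppose $S$ is an $n\times n$ Seidel matrix whose smallest eigenvalue $\lambda_0 < -1$ has multiplicity $n-d$. First I would set $G := I - \frac{1}{\lambda_0} S = \frac{1}{\lambda_0}(\lambda_0 I - S) = -\frac{1}{\lambda_0}(S - \lambda_0 I)$; since $S - \lambda_0 I \succeq 0$ and $-1/\lambda_0 > 0$, $G$ is positive semidefinite. Its diagonal entries are $1$ (as $S$ has zero diagonal) and its off-diagonal entries are $-\frac{1}{\lambda_0}(\pm 1) = \mp\frac{1}{\lambda_0}$, all of absolute value $1/|\lambda_0| < 1$. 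Because $G = \lambda_0^{-1}(\lambda_0 I - S)$ and $\lambda_0 I - S$ has eigenvalue $0$ with multiplicity exactly $n-d$, we get $\mathrm{rank}(G) = n - (n-d) = d$. A rank-$d$ positive semidefinite matrix with unit diagonal is the Gram matrix of $n$ unit vectors $u_1,\dots,u_n$ in $\mathbb{R}^d$, and $|\langle u_i,u_j\rangle| = 1/|\lambda_0|$ is constant, so the lines $\mathbb{R}u_i$ form an equiangular line system with $n$ lines in $\mathbb{R}^d$ (they span $\mathbb{R}^d$ since the Gram matrix has full rank $d$, though spanning is not needed for the conclusion).

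The main technical point — and the only place requiring care — is the bookkeeping on the multiplicity: in the ``if'' direction one must guarantee the multiplicity is at least $n-d$ (immediate from $\mathrm{rank}\,G\le d$), while in the ``only if'' direction the hypothesis ``multiplicity $n-d$'' is exactly what pins down $\mathrm{rank}\,G = d$, ensuring the vectors live in $\mathbb{R}^d$ and not a proper subspace. I would also remark that the condition $\lambda_0 < -1$ (strict) is what forces the off-diagonal inner products to have absolute value strictly less than $1$, so that the $u_i$ are genuinely distinct lines rather than coincident ones; the equivalence would fail for $\lambda_0 = -1$. No serious obstacle is anticipated beyond stating these observations cleanly.
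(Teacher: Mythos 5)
Your proposal is correct and takes essentially the same route as the paper: both directions use the linear transformation $G = I - S/\lambda_0$ (and its inverse) between a Seidel matrix and a positive semidefinite unit-diagonal Gram matrix, with the multiplicity of the extreme eigenvalue controlling $\operatorname{rank}(G)=d$. Your added remarks on the multiplicity bookkeeping and on why $\lambda_0<-1$ keeps the lines distinct are correct refinements of points the paper glosses over, not a different argument.
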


\begin{proof}
    If there exists an $n$ points equiangular line system in $\mathbb S^{d-1}$ with common angle $\alpha < 1$, let the Gram matrix $G$ of it is positive semidefinite with rank $d$. We define the Seidel matrix $S$ with 
    \[
    S = \frac{G - I}{\alpha}.
    \]
    It is easy to check that $S$ has smallest eigenvalue $1/\alpha < -1$ with multiplicity $n-d$.

    Suppose there exists a $n\times n$ Seidel matrix has smallest eigenvalue $\lambda_0 < -1$ with multiplicity $n-d$, and denote it as $S$. Define
    \[
    G := -\frac{S}{\lambda_0} + I.
    \]
    
    Let $S$ has eigenvalues $\lambda_0 \leq \lambda_1 \leq \dots \leq \lambda_{n}$ with corresponding eigenvectors $v_i$. Then
    \[
    G \cdot v_i = \left(-\frac{S}{\lambda_0} + I\right)v_i = \left(-\frac{\lambda_i}{\lambda_0} + 1\right)v_i.
    \]
    
    Thus $-\frac{\lambda_i}{\lambda_0} + 1$ are the eigenvalues of $G$, and since $\lambda_0 < 0$, we have
    \[
    0 = -\frac{\lambda_0}{\lambda_0} + 1 \leq -\frac{\lambda_1}{\lambda_0} + 1 \leq \dots \leq -\frac{\lambda_{n}}{\lambda_0} + 1.
    \]
    
    Therefore, all eigenvalues of $G$ are non-negative, making $G$ positive semidefinite. Moreover, the smallest eigenvalue of $G$ is $0$ with multiplicity $n-d$. By the definition of Seidel matrices, $G$ is a real symmetric matrix.
    
    For any real symmetric positive semidefinite matrix $G$, we can orthogonally diagonalize it as $G = Q\Lambda Q^T$, where $Q^TQ = I$ and
    \[\Lambda = \begin{bmatrix}
        x_1 & 0    & \dots & 0\\
        0   & x_2  & \dots & 0\\
        \vdots   & \vdots  & \ddots & \vdots\\
        0   & 0  & \dots & x_n\\
    \end{bmatrix},\]
    where $x_i$ are the eigenvalues.
    
    Since $G$ is positive semidefinite, $x_i \geq 0$ for all $i$, so we can define
    \[\Lambda^{1/2} = \begin{bmatrix}
        \sqrt{x_1} & 0    & \dots & 0\\
        0   & \sqrt{x_2}  & \dots & 0\\
        \vdots   & \vdots  & \ddots & \vdots\\
        0   & 0  & \dots & \sqrt{x_n}\\
    \end{bmatrix}.\]
    
    Setting $A = \Lambda^{1/2}Q^T$, we obtain
    \[
    G = Q\Lambda Q^T = (Q\Lambda^{1/2})(\Lambda^{1/2}Q^T) = (\Lambda^{1/2}Q^T)^T(\Lambda^{1/2}Q^T) = A^TA.
    \]
    
    Therefore, $G$ is a Gram matrix, and $A$ is an $n \times n$ matrix corresponding to a set of equiangular lines with inner product $\frac{-1}{\lambda_0} < 1$ and $\text{rank}(A) = d$. This implies that $A$ represents a set of $n$ equiangular lines in $\mathbb{R}^d$.
\end{proof}

\begin{theorem}
Let $X$ be an $n$ points spherical two-distance set in $\mathbb S^{d-1}$ with inner products $a,b$.  If $a+b < 0$, then 
\[
n \le N(d+1)
\]
    where $N(d)$ denotes the maximum number of equiangular lines in $\mathbb{R}^d$.
\end{theorem}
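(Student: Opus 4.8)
The plan is to build a Seidel matrix from $X$ whose smallest eigenvalue has large multiplicity, and then invoke Lemma \ref{lem:Seidel} to convert that data into an equiangular line system in $\mathbb{R}^{d+1}$. Concretely, I would start with the Gram matrix $G = G(X)$, which is positive semidefinite of rank at most $d$, and form the Seidel matrix $S = \frac{2G - (a+b)J + (a+b-2)I}{b-a}$ as in the definition preceding Theorem \ref{thm:eigenvalues}. By that theorem, when $a+b < 0$ the matrix $S$ has smallest eigenvalue $\lambda_n(S) = \frac{a+b-2}{b-a}$ with multiplicity at least $n-d-1$. Since $a+b < 0 < 2$ and $b - a > 0$, we have $\frac{a+b-2}{b-a} < \frac{-2}{b-a} < 0$, and in fact $\frac{a+b-2}{b-a} = \frac{(a+b)-2}{b-a} < -1$ because $a+b-2 < -(b-a)$ is equivalent to $2a < 2$, i.e. $a < 1$, which holds. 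So $S$ is an $n \times n$ Seidel matrix whose smallest eigenvalue $\lambda_0 := \frac{a+b-2}{b-a}$ satisfies $\lambda_0 < -1$ and has multiplicity $m \ge n - d - 1$.

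Next I would feed this into Lemma \ref{lem:Seidel}: if the multiplicity of $\lambda_0$ were exactly $n - (d+1)$, the lemma immediately produces an equiangular line system with $n$ points in $\mathbb{R}^{d+1}$, hence $n \le N(d+1)$. The subtlety is that the multiplicity is only bounded below by $n-d-1$; if it is strictly larger, say $n - d'$ with $d' < d+1$, then Lemma \ref{lem:Seidel} still yields $n$ equiangular lines, now in the even smaller space $\mathbb{R}^{d'}$, and since any configuration of $n$ equiangular lines in $\mathbb{R}^{d'}$ sits inside $\mathbb{R}^{d+1}$ (by appending zero coordinates, the Gram matrix is unchanged and still has rank $d' \le d+1$), we again get $n \le N(d')\le N(d+1)$, using monotonicity $N(d') \le N(d+1)$. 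Thus in all cases $n \le N(d+1)$.

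The main obstacle — and the point that needs care — is verifying that the hypotheses of Lemma \ref{lem:Seidel} are genuinely met, in particular that $\lambda_0 < -1$ strictly (not merely $\le -1$) so that the lemma's construction $G := -S/\lambda_0 + I$ yields equiangular lines with genuine common angle $-1/\lambda_0 < 1$ rather than a degenerate configuration; this reduces to the elementary inequality $a < 1$, which is part of the standing assumption $-1 \le a < b < 1$. A secondary point is the multiplicity bookkeeping: Lemma \ref{lem:Seidel} is stated as an "if and only if" for multiplicity exactly $n-d$, so I must phrase the argument as "there exists $d' \le d+1$ with $\lambda_0$ of multiplicity exactly $n-d'$," apply the lemma in dimension $d'$, and then embed. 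Neither step involves real difficulty; the work is entirely in stating the reduction cleanly and checking the strict inequality.
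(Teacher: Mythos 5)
Your proposal follows essentially the same route as the paper: form the Seidel matrix $S = \frac{2G-(a+b)J+(a+b-2)I}{b-a}$, invoke Theorem \ref{thm:eigenvalues} to get smallest eigenvalue $\frac{a+b-2}{b-a}$ with multiplicity at least $n-d-1$, check it is $<-1$, and apply Lemma \ref{lem:Seidel} to produce $n$ equiangular lines in $\mathbb{R}^{d+1}$; your extra care about the lemma requiring \emph{exact} multiplicity $n-d'$ (handled by embedding $\mathbb{R}^{d'}\subseteq\mathbb{R}^{d+1}$ and monotonicity of $N$) is a point the paper glosses over, and is welcome. One trivial slip: $a+b-2<-(b-a)$ simplifies to $2b<2$, i.e.\ $b<1$, not $a<1$, but either inequality holds by hypothesis so the conclusion is unaffected.
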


The theorem was proven in \cite{delsarte1991spherical}. Now we give a new proof.

\begin{proof} If $a+b < 0$, by Theorem \ref{thm:eigenvalues}, Seidel matrix $S = \frac{2G - (a+b)\cdot J  + (a+b-2) \cdot I } {b-a}$ has the smallest eigenvalue $\frac{a+b-2}{b-a}$ with the multiplicity at least $n-d-1$.

Since 
\[
\frac{a+b-2}{b-a} + 1 =  \frac{2b-2}{b-a} = \frac{2(b-1)}{b-a} < 0,
\]
by lemma \ref{lem:Seidel}, there exists a equiangular line system with $n$ points in $\mathbb R^{d+1}$.

Therefore, by the definition of function $N$, we obtain 
\[
n \le N(d+1).
\]
\end{proof}

\begin{theorem} \label{correspondence}
There exists a bijective correspondence between $n$ points equiangular line systems in $\mathbb{R}^{d+1}$ with common angle $\alpha$ and families of $n$ points spherical two-distance sets in $\mathbb{S}^{d-1}$ with inner products $a, b$ satisfying $a+b < 0$ and $\frac{a-b}{a+b-2} = \alpha$.
\end{theorem}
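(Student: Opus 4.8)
The plan is to make both directions of the correspondence explicit at the level of Gram matrices and Seidel matrices, then check that the two constructions are mutually inverse. Starting from an $n$-point spherical two-distance set $X \subset \mathbb{S}^{d-1}$ with inner products $a < b$ and $a+b < 0$, form its Gram matrix $G = G(X)$ and the associated Seidel matrix $S$ from the Definition preceding Theorem \ref{thm:eigenvalues}. By Theorem \ref{thm:eigenvalues}, $S$ has smallest eigenvalue $\lambda_0 = \frac{a+b-2}{b-a}$ with multiplicity at least $n-d-1$, and the computation in the previous theorem shows $\lambda_0 < -1$. Feeding $S$ into the ``if'' direction of Lemma \ref{lem:Seidel} produces a positive semidefinite matrix $\tilde G = -S/\lambda_0 + I$ of rank $n - (\text{mult of }\lambda_0) \le d+1$; this is the Gram matrix of $n$ equiangular lines in $\mathbb{R}^{d+1}$ with common angle $\alpha = -1/\lambda_0 = \frac{b-a}{2-a-b} = \frac{a-b}{a+b-2}$, which is exactly the prescribed angle. (One must check the rank is exactly $d+1$, not smaller; this uses that $X$ is genuinely $d$-dimensional together with the multiplicity bookkeeping from Theorem \ref{thm:eigenvalues}.)

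Conversely, given an $n$-point equiangular line system in $\mathbb{R}^{d+1}$ with angle $\alpha$, take its Gram matrix $G'$ (rank $d+1$, diagonal $1$, off-diagonal $\pm\alpha$) and set $S' = (G' - I)/\alpha$, a Seidel matrix. Then reverse the normalization from the spherical Definition: solve the linear relations $\frac{a+b-2}{b-a} = -1/\alpha$ (equivalently $\frac{a-b}{a+b-2} = \alpha$) together with the requirement $a+b<0$ to recover a unique pair $(a,b)$ up to the residual freedom, and reconstruct a candidate Gram matrix $G = \frac{b-a}{2} S' + \frac{a+b}{2} J + \left(1 - \frac{a+b}{2}\right) I$. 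One checks $G$ is positive semidefinite of rank $d$ — this is where Theorem \ref{thm:eigenvalues} is used in the other direction: the zero eigenspace of $G'$ has dimension $n-d-1$, and passing through $S'$ and the affine reconstruction adds exactly one more zero eigenvalue (coming from the $J$ term interacting with the all-ones vector), giving rank $d$. Then $G$ is the Gram matrix of a spherical two-distance set on $\mathbb{S}^{d-1}$ with the required inner products.

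The main obstacle I expect is pinning down that the two maps are inverse to each other as maps on configurations rather than merely on matrices, and in particular handling the normalization/scaling ambiguities carefully: a spherical two-distance set determines $(a,b)$ canonically, but an equiangular line system only sees $\alpha$, so the statement implicitly asserts that $\alpha$ plus the sign condition $a+b<0$ recovers $(a,b)$ uniquely — this needs the relation $\frac{a-b}{a+b-2}=\alpha$ to be supplemented by whatever second equation the construction secretly enforces (it will turn out that the rank condition forces the all-ones vector to be an eigenvector of $G$, which fixes $a+b$ in terms of $n$ and the multiplicities, hence fixes both $a$ and $b$). Once the bookkeeping of eigenvalue multiplicities under the three transformations ($G \leftrightarrow S$, $S \leftrightarrow \tilde G$, and the rank counts from Theorem \ref{thm:eigenvalues} and Lemma \ref{lem:Seidel}) is laid out in a single table, verifying that composing the two directions returns the original Gram matrix is a routine linear-algebra check, and the bijectivity follows.
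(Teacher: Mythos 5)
Your Direction 1 is exactly the paper's Step 1: pass from the Gram matrix $G$ to the Seidel matrix $S$, use Theorem \ref{thm:eigenvalues} to locate the eigenvalue $\frac{a+b-2}{b-a}$ with multiplicity at least $n-d-1$, and rescale to $G' = -\frac{b-a}{a+b-2}S + I$, which is positive semidefinite of rank at most $d+1$ with off-diagonal entries $\pm\frac{b-a}{2-a-b}$. That half is fine and matches the paper.

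The gap is in your Direction 2, specifically the claim that $G$ is positive semidefinite of rank $d$ because ``the $J$ term interacting with the all-ones vector'' contributes exactly one additional zero eigenvalue. This does not work: $J$ does not commute with $S'$, and the all-ones vector $\mathbf{1}$ is in general not an eigenvector of $S'$. Unwinding your reconstruction formula gives $G = \frac{2-a-b}{2}G' + \frac{a+b}{2}J$, i.e.\ a positive semidefinite matrix minus a rank-one positive semidefinite correction (since $a+b<0$). If $\mathbf{1}$ is not orthogonal to $\ker G'$ (equivalently, to the bottom eigenspace of $S'$), then any $v\in\ker G'$ with $\mathbf{1}^{\mathsf T}v\ne 0$ satisfies $v^{\mathsf T}Gv = \frac{a+b}{2}\left(\mathbf{1}^{\mathsf T}v\right)^2 < 0$, so $G$ is not even positive semidefinite, let alone of rank $d$. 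A concrete failure: three equiangular lines in $\mathbb{R}^2$ (so $\alpha=1/2$, $d=1$) would have to produce a three-point two-distance set in $\mathbb{S}^0$, which has only two points. So the converse direction genuinely requires an additional hypothesis or argument (controlling the position of $\mathbf{1}$ relative to the bottom eigenspace, e.g.\ via the choice of switching class), and your proposed mechanism does not supply it. To be fair, the paper's own Step 2 silently skips the same verification --- it never checks that the reconstructed $G$ is positive semidefinite of rank at most $d$ --- so you have reproduced the paper's construction together with its gap, but the specific justification you offer for closing it is incorrect. A secondary mismatch: you try to pin down $(a,b)$ uniquely from $\alpha$, whereas the statement and the paper deliberately speak of a \emph{family}; the constraint $\frac{a-b}{a+b-2}=\alpha$ with $a+b<0$ only forces $b = \frac{(1-\alpha)a+2\alpha}{1+\alpha}$, leaving $a$ as a free parameter.
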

\begin{proof}
We prove both directions of the correspondence.

\textbf{Step 1:} From spherical two-distance set to equiangular line system.

Let $X$ be a $n$ point spherical two-distance set in $\mathbb{S}^{d-1}$ with inner products $a, b$ where $a+b < 0$. Let $G = G(X)$ be the Gram matrix of $X$. Define the Seidel matrix $S$ by
\[
S = \frac{2G - (a+b) \cdot J + (a+b-2) \cdot I}{b-a}.
\]
By Theorem \ref{thm:eigenvalues}, the Seidel matrix $S$ has smallest eigenvalue $\frac{a+b-2}{b-a}$ with multiplicity at least $n-d-1$.

We construct a new matrix $G'$ by
\[
G' = -\frac{S}{\frac{a+b-2}{b-a}} + I = \frac{a-b}{a+b-2} \cdot S +I.
\]

\textbf{Claim:} $G'$ is the Gram matrix of an equiangular line system in $\mathbb{R}^{d+1}$.

Since $-\frac{a+b-2}{b-a} > 0$, the matrix $G'$ has smallest eigenvalue $0$ with multiplicity at least $n-d-1$, which implies that $G'$ is positive semidefinite. Since $S$ is symmetric and the diagonal entries of $G'$ are all 1's, $G'$ is a valid Gram matrix.

The non-diagonal entries of $S$ are either $1$ or $-1$, so the non-diagonal entries of $G'$ are either $\frac{b-a}{2-a-b}$ or $-\frac{b-a}{2-a-b}$. This shows that $G'$ corresponds to an equiangular line system with common angle $\alpha = \frac{b-a}{2-a-b}$.

Since $\text{rank}(G') \le d+1$, the equiangular line system lies in $\mathbb{R}^{d+1}$.

\textbf{Step 2:} From equiangular line system to spherical two-distance set.

Conversely, let $Y$ be a $n$ points equiangular line system in $\mathbb{R}^{d+1}$ with common angle $\alpha$. Let $G'$ be its Gram matrix. Define the Seidel matrix
\[
S = \frac{a+b-2}{a-b} \cdot (G' -I),
\]
where $a, b$ are chosen such that $\frac{b-a}{2-a-b} = \alpha$ and $a+b < 0$. 

\textbf{Claim.} $S$ is a Seidel matrix.
The diagonal entries of $G'$ are $1$, so the diagonal entries of $S$ are $\frac{a+b-2}{a-b} \cdot(1-1)=0$. The non-diagonal entries of $G'$ are either $\alpha$ or $-\alpha$. Since $\frac{b-a}{2-a-b} = \alpha$, the non-diagonal entries of $S$ are $\frac{1}{\alpha} \cdot(\pm\alpha-0)=\pm1$. 

Therefore, $S$ has zero diagonal entries and non-diagonal entries equal to $\pm 1$, confirming that $S$ is indeed a Seidel matrix.

Then we can construct the Gram matrix
\[
G = \frac{(b-a)\cdot S - (a+b-2)\cdot I + (a+b)\cdot J}{2},
\]
which corresponds to a spherical two-distance set in $\mathbb{S}^{d-1}$ with inner products $a, b$.

Given $\alpha$, the constraint $\frac{a-b}{a+b-2} = \alpha$ with $a+b < 0$ can simplify to 
\[
b = \frac{(1-\alpha)a +2\alpha}{1+\alpha}.
\]

Therefore, each equiangular line system corresponds to a family of spherical two-distance sets.

\end{proof}

\begin{corollary}
If there exists a $n$ point equiangular line system in $\mathbb{R}^{d+1}$ with common angle $\alpha$, then we can construct spherical two-distance sets in $\mathbb{S}^{d-1}$ with inner products $a, b$ satisfying $\frac{a-b}{a+b-2} = \alpha$ and $a+b < 0$.
\end{corollary}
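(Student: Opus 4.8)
The statement to be proved is the Corollary following Theorem~\ref{correspondence}, which asserts that an $n$-point equiangular line system in $\mathbb{R}^{d+1}$ with common angle $\alpha$ gives rise to spherical two-distance sets in $\mathbb{S}^{d-1}$ with inner products $a,b$ satisfying $\frac{a-b}{a+b-2}=\alpha$ and $a+b<0$.

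Let me think about how I would prove this.

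The Corollary is essentially a direct consequence of Step 2 of the proof of Theorem~\ref{correspondence}. So my proof plan is really just: invoke that construction and check the details that make it go through.

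First I would start with the Gram matrix $G'$ of the given equiangular line system in $\mathbb{R}^{d+1}$ — this is an $n\times n$ positive semidefinite matrix of rank at most $d+1$, with $1$'s on the diagonal and off-diagonal entries $\pm\alpha$. Form the Seidel matrix $S = \frac{1}{\alpha}(G'-I)$, which has zero diagonal and $\pm 1$ off-diagonal entries. Then I would pick $a,b$ with $a<b$, $a+b<0$, and $\frac{a-b}{a+b-2}=\alpha$; this is a one-parameter family, since the constraint can be solved as $b = \frac{(1-\alpha)a+2\alpha}{1+\alpha}$ (valid for $a$ in an appropriate range keeping $-1\le a<b<1$ and $a+b<0$). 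Define $G = \frac{(b-a)S - (a+b-2)I + (a+b)J}{2}$ and verify the diagonal of $G$ is $1$ (so it is a valid candidate Gram matrix) and the off-diagonal entries are $a$ or $b$. Finally, since $G = \frac{b-a}{2\alpha}(G'-I) + \frac{2-a-b}{2}I + \frac{a+b}{2}J$ and $\frac{b-a}{2\alpha} = \frac{2-a-b}{2} > 0$ (using the angle relation), $G = \frac{2-a-b}{2}G' + \frac{a+b}{2}J$, a nonnegative combination of positive semidefinite matrices (note $J\succeq 0$ and $\frac{a+b}{2}$ — wait, $a+b<0$, so this needs care). Let me reconsider: actually one should instead argue via the eigenvalues of $S$: the smallest eigenvalue of $S$ is $-1/\alpha$ with multiplicity $n-(d+1)$ coming from $G'$ having eigenvalue $0$ there, and then $G = \frac{b-a}{2}(S + \frac{2-a-b}{b-a}I) + \frac{a+b}{2}J$; one checks $G$ is PSD of rank at most $d+1$... but we want rank $\le d$ for $\mathbb{S}^{d-1}$. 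Here is where I would be careful: the $J$ term can drop the rank by one when $a+b<0$, exactly as in Theorem~\ref{thm:eigenvalues}. So the key step is to show $\mathrm{rank}(G)\le d$, equivalently that $G$ is PSD with nullity at least $n-d$, using that the all-ones vector interacts with the eigenvalue structure when $a+b<0$.

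The main obstacle, and the only non-formal point, is verifying that the resulting $G$ has rank at most $d$ (not merely $d+1$) so that the two-distance set genuinely embeds in $\mathbb{S}^{d-1}$ rather than $\mathbb{S}^{d}$, together with checking the range of $a$ for which $-1\le a<b<1$ and $a+b<0$ hold simultaneously so that the family is nonempty. Both are bookkeeping: the rank drop is the mirror image of the computation in the proof of Theorem~\ref{thm:eigenvalues} for the $a+b<0$ case (the vector $\mathbf 1$ gets absorbed into the image of $J$), and the range check follows from solving $b=\frac{(1-\alpha)a+2\alpha}{1+\alpha}$ explicitly. Everything else — that $S$ is a genuine Seidel matrix, that $G$ has the right diagonal and off-diagonal entries, that it is PSD — is contained verbatim in Step 2 of the proof of Theorem~\ref{correspondence}, so the cleanest writeup is simply to cite that construction and record that it produces a whole one-parameter family parametrized by the admissible values of $a$.

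Concretely, the proof I would write is short:

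\begin{proof}
Apply Step 2 of the proof of Theorem~\ref{correspondence}. Given the equiangular line system $Y\subset\mathbb{R}^{d+1}$ with common angle $\alpha$ and Gram matrix $G'$, and given any admissible pair $a,b$ with $a<b$, $a+b<0$ and $\frac{a-b}{a+b-2}=\alpha$ (equivalently $b=\frac{(1-\alpha)a+2\alpha}{1+\alpha}$), the matrix
\[
G = \frac{(b-a)\cdot S - (a+b-2)\cdot I + (a+b)\cdot J}{2}, \qquad S = \frac{1}{\alpha}(G'-I),
\]
is positive semidefinite with $1$'s on the diagonal, off-diagonal entries in $\{a,b\}$, and rank at most $d$, hence is the Gram matrix of an $n$-point spherical two-distance set in $\mathbb{S}^{d-1}$ with inner products $a,b$. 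Since $a$ ranges over an interval of admissible values, this yields a whole family of such configurations.
\end{proof}
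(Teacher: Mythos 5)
Your proposal matches the paper's treatment: the corollary is given without a separate proof, being an immediate consequence of Step 2 of the proof of Theorem~\ref{correspondence}, which is exactly the construction you invoke (form $S=\frac{1}{\alpha}(G'-I)$, choose any admissible $a,b$ with $b=\frac{(1-\alpha)a+2\alpha}{1+\alpha}$ and $a+b<0$, and set $G = \frac{(b-a)S-(a+b-2)I+(a+b)J}{2}$). The one point you rightly flag as the only non-formal step — that $G$ is positive semidefinite of rank at most $d$, not merely $d+1$ — is likewise asserted but not verified in the paper's Step 2, so your writeup sits at the same level of detail as the source.
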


\begin{example}
Since there exists a 36 points equiangular line system in $\mathbb{R}^{15}$ with common angle $1/5$, we define the Gram matrix 
\[
G = \frac{S \cdot (b-a) - (a+b-2)\cdot I + (a+b)\cdot J}{2},
\]
where $S = S(X)$ is the Seidel matrix of the equiangular line system and $a, b$ satisfy
\[
 \frac{a-b}{a+b-2} = \frac{1}{5} 
\]
\[
\Rightarrow b = \frac{2a+1}{3}
\]
The Gram matrix $G$ corresponds to a family of spherical two-distance sets in $\mathbb{S}^{13}$ with inner products $a, b$ satisfy $2a + 1 = 3b$ and $a+
b<0$. 
\end{example}

\begin{corollary} \label{small}
 Let $X$ be an $n$ points spherical two-distance set in $\mathbb S^{d-1}$ with inner products $a,b$.  If $a+b < 0$, then 
    \[
    n \le \dfrac{(d+1)\left(\left(\dfrac{a+b-2}{b-a}\right)^2-1\right)}{\left(\dfrac{a+b-2}{b-a}\right)^2-(d+1)}
    \]
\end{corollary}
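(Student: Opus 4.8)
The plan is to run the same spectral argument as in the proof of Theorem~\ref{lem:RelativeBound}, but now applied directly to the order-$n$ Seidel matrix $S$ attached to the spherical two-distance set, using the spectral description from Theorem~\ref{thm:eigenvalues} in the regime $a+b<0$. Set $-\gamma := \dfrac{a+b-2}{b-a}$; since $a,b<1$ we have $a+b-2<0<b-a$, so $\gamma>0$, and in fact $\gamma>1$ because $-\gamma+1=\dfrac{2(b-1)}{b-a}<0$. By Theorem~\ref{thm:eigenvalues}, $-\gamma$ is the smallest eigenvalue of $S$ with multiplicity at least $n-d-1$, and there is no isolated extra eigenvalue below it (in contrast to the Euclidean and the $a+b\ge 0$ cases), so the eigenvalue bookkeeping is actually cleaner here.

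First I would fix an eigenbasis and write the spectrum of $S$ as $n-d-1$ copies of $-\gamma$ together with the remaining $d+1$ eigenvalues $\lambda_1\ge\lambda_2\ge\cdots\ge\lambda_{d+1}$ (all $\ge -\gamma$; some may coincide with $-\gamma$ if the multiplicity exceeds $n-d-1$, which does no harm). Because $S$ is an $n\times n$ Seidel matrix, $\mathrm{tr}(S)=0$ and $\mathrm{tr}(S^2)=n(n-1)$, which give
\[
\sum_{i=1}^{d+1}\lambda_i=(n-d-1)\gamma,\qquad \sum_{i=1}^{d+1}\lambda_i^2=n(n-1)-(n-d-1)\gamma^2.
\]
Applying Cauchy--Schwarz, $\bigl(\sum_{i=1}^{d+1}\lambda_i\bigr)^2\le(d+1)\sum_{i=1}^{d+1}\lambda_i^2$, and substituting the two identities yields
\[
(n-d-1)^2\gamma^2\le(d+1)\bigl(n(n-1)-(n-d-1)\gamma^2\bigr),
\]
which simplifies, using $(n-d-1)+(d+1)=n$ and cancelling $n$, to $(n-d-1)\gamma^2\le(d+1)(n-1)$, i.e.
\[
n\bigl(\gamma^2-(d+1)\bigr)\le(d+1)(\gamma^2-1).
\]
In the relevant range $\gamma^2>d+1$ this rearranges to $n\le\dfrac{(d+1)(\gamma^2-1)}{\gamma^2-(d+1)}$, which is exactly the stated bound since $\gamma^2=\bigl(\tfrac{a+b-2}{b-a}\bigr)^2$.

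An equivalent route is to invoke Theorem~\ref{correspondence}: an $n$-point spherical two-distance set with $a+b<0$ corresponds to an $n$-line equiangular system in $\mathbb{R}^{d+1}$ with common angle $\alpha=\dfrac{b-a}{2-a-b}=\dfrac1\gamma$, and then to apply the classical relative bound $n\le\dfrac{(d+1)(1-\alpha^2)}{1-(d+1)\alpha^2}$ for equiangular lines; I would, however, present the self-contained spectral derivation above as the main proof, since it parallels the earlier sections. The only points requiring care are bookkeeping-level: handling the ``at least'' in the multiplicity statement of Theorem~\ref{thm:eigenvalues} (dealt with by the grouping trick above), and reading the inequality with the tacit hypothesis $\gamma^2>d+1$ --- when $\gamma^2\le d+1$ the displayed inequality $n(\gamma^2-(d+1))\le(d+1)(\gamma^2-1)$ is vacuous and one must instead fall back on the absolute bound for equiangular lines. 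I do not anticipate any genuine obstacle beyond these.
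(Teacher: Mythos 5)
Your proposal is correct, but your primary argument takes a genuinely different route from the paper. The paper's proof of this corollary is a two-line reduction: it invokes Theorem~\ref{correspondence} to pass to an $n$-line equiangular system in $\mathbb{R}^{d+1}$ with common angle $\alpha=\frac{b-a}{2-a-b}$ and then quotes the classical Lemmens--Seidel relative bound $n\le\frac{(d+1)(1-\alpha^2)}{1-(d+1)\alpha^2}$ --- i.e.\ exactly the ``equivalent route'' you mention in passing and then set aside. Your main derivation instead redoes the spectral argument directly on the order-$n$ Seidel matrix, using $\mathrm{tr}(S)=0$, $\mathrm{tr}(S^2)=n(n-1)$, the multiplicity statement of Theorem~\ref{thm:eigenvalues} for $a+b<0$, and Cauchy--Schwarz; the algebra checks out, and as you observe it is in fact cleaner than the $a+b\ge0$ case because there is no isolated eigenvalue below $-\gamma$ to carry along. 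What your approach buys is self-containedness and symmetry with Sections 3 and 5 (it is essentially a re-proof of the relative bound for equiangular lines rather than a citation of it), plus an explicit acknowledgement of the tacit hypothesis $\gamma^2>d+1$ under which the displayed bound is meaningful --- a point the paper leaves implicit both here and in the cited Lemmens--Seidel inequality. What the paper's route buys is brevity and the conceptual point that this corollary is literally the equiangular-lines relative bound in disguise, which is the theme of Section 6. Both arguments are valid; yours is a legitimate alternative proof rather than a reproduction of the paper's.
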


\begin{proof}
    According to Theorem \ref{correspondence}, $X$ corresponds to a $n$ point equiangular line system in $\mathbb{R}^{d+1}$ with common angle $\alpha$, where $\frac{a-b}{a+b-2} = \alpha$.
    In the study by Lemmens and Seidel \cite{LEMMENS1973494}, we have
    
    \[
    n \le \dfrac{(d+1)\left(1- \alpha^2\right)}{1-(d+1)\alpha^2}.
    \]
    Therefore, we obtain a similar inequality for spherical two-distance sets:
    \[
    n \le \dfrac{(d+1)\left(\left(\dfrac{a+b-2}{b-a}\right)^2-1\right)}{\left(\dfrac{a+b-2}{b-a}\right)^2-(d+1)}.
    \]
    
\end{proof}

\section*{7. Equiangular Tight frames}
\stepcounter{counter}

In this section, we focus theorem \ref{lem:RelativeBound}, and theorem \ref{lem:SpericalRelativeBound} when equalities hold.
For example, if 

\[
    \dfrac{(d+1)\left(\left(\dfrac{1+\delta^2}{1-\delta^2}\right)^2 - 1\right)}{\left(\dfrac{1+\delta^2}{1-\delta^2}\right)^2-(d+1)}+1 \in \mathbb Z,
\]
the Seidel matrix has spectrum 
\[
\left\{ \left[-\sqrt{\dfrac{(n-2)(d+1)}{n-d-2}}\right]^{n-d-2},\left[\sqrt{\dfrac{(n-2)(n-d-2)}{d+1}}\right]^{d+1} \right\}.
\]

\begin{definition} [\cite{holmes2004optimal}]
    Let H be a Hilbert space, real or complex, and let $F = \{f_i\}_{i\in \mathbb I} \subset H$ be a subset. We call $F$ a frame for $H$ provided that there are two constants $C, D > 0$ such that
the inequality 
\[C\|x\|^2 \le
\|\langle x,f\rangle\|^2 \le D\|x\|^2, 
j\in I
\]
holds for every $x \in H$. When $C = D$, then we call $F$ a tight frame.
\end{definition}

Holmes and Paulsen \cite{holmes2004optimal} showed that a Seidel matrix with exactly two distinct eigenvalues corresponds to an Equiangular Tight Frame (ETF). 

\begin{theorem}[Theorem 3.3 of \cite{holmes2004optimal}] \label{thm:3.3}
 Let $Q$ be a self-adjoint $n \times n$ matrix $Q$ with $q_{i,i} = 0$ for all $i$ and
$|q_{i,j}| = 1$ for all $i \ne j$. Then the following are equivalent: 
\begin{enumerate}[(i)]
    \item $Q$ is the signature matrix of a $2$-uniform $(n, d)$-frame,
    \item $Q^2 =(n-1)I + \mu Q$
for some necessarily real number $\mu$,
    \item  $Q$ has exactly two eigenvalues, $\rho_1 \ge \rho_2$.
\end{enumerate}
\end{theorem}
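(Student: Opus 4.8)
The plan is to prove the three-way equivalence through the chain (iii) $\Leftrightarrow$ (ii) and (iii) $\Leftrightarrow$ (i), using throughout the elementary observation that the hypotheses $q_{i,i}=0$ and $|q_{i,j}|=1$ force $(Q^2)_{i,i} = \sum_{j} q_{i,j}\,\overline{q_{i,j}} = n-1$ for every $i$; that is, $Q^2$ and $(n-1)I$ have the same diagonal. Since $Q$ is self-adjoint it has real eigenvalues, and since its diagonal vanishes while some off-diagonal entry does not, $Q$ is not a scalar matrix, so it has at least two distinct eigenvalues; the real content of (iii) is therefore the upper bound of two.

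First I would treat (ii) $\Leftrightarrow$ (iii). If (iii) holds with eigenvalues $\rho_1 > \rho_2$, then, $Q$ being diagonalizable with exactly these two eigenvalues, its minimal polynomial is $(x-\rho_1)(x-\rho_2)$, so $Q^2 = (\rho_1+\rho_2)Q - \rho_1\rho_2 I$; comparing diagonals with the observation above gives $-\rho_1\rho_2 = n-1$, i.e.\ $Q^2 = (n-1)I + \mu Q$ with $\mu = \rho_1 + \rho_2 \in \mathbb{R}$. Conversely, if (ii) holds then $\mu Q = Q^2 - (n-1)I$ is self-adjoint and $Q \neq 0$, forcing $\mu$ real; then $Q$ annihilates $x^2 - \mu x - (n-1)$, a real quadratic with positive discriminant $\mu^2 + 4(n-1)$ and hence two distinct real roots, so $Q$ has at most two eigenvalues, hence exactly two.

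Next, (iii) $\Leftrightarrow$ (i), the bridge to frame theory. Since $\mathrm{tr}(Q) = 0$ and both eigenvalues occur with positive multiplicity, $\rho_1$ and $\rho_2$ have opposite signs; write $\rho_2 < 0 < \rho_1$ and let $d$ be the multiplicity of $\rho_1$. Given (iii), put $G := I - \rho_2^{-1}Q$: it is self-adjoint, has all diagonal entries $1$, off-diagonal entries of modulus $|\rho_2|^{-1}$, and eigenvalues $0$ (multiplicity $n-d$) and $1-\rho_1\rho_2^{-1} > 0$ (multiplicity $d$), hence is positive semidefinite of rank $d$. Factoring $G = F^{*}F$ with $F$ a $d \times n$ matrix produces $n$ unit vectors $f_1,\dots,f_n$ in $\mathbb{R}^d$ (or $\mathbb{C}^d$) which are equiangular, and since the nonzero spectrum of the frame operator $FF^{*}$ is the single value $1-\rho_1\rho_2^{-1}$ with full multiplicity $d$, one has $FF^{*} = (1-\rho_1\rho_2^{-1})I_d$; the frame is tight, so $\{f_i\}$ is $2$-uniform and $Q$ is its signature matrix. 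Conversely, a $2$-uniform $(n,d)$-frame has Gram matrix $G = I + cQ'$ for its signature matrix $Q'$ and common value $c = |\langle f_i, f_j\rangle| > 0$, and tightness forces $G$ to have exactly the eigenvalues $0$ and $n/d$; hence $Q' = c^{-1}(G - I)$ has exactly the two eigenvalues $-c^{-1}$ and $c^{-1}(n/d - 1)$, giving (iii).

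The linear-algebra manipulations are routine; the step requiring genuine care is (iii) $\Rightarrow$ (i), where one must unwind the definitions of ``$2$-uniform frame'' and ``signature matrix'' from \cite{holmes2004optimal}, check that the positive-semidefinite $G$ is truly the Gram matrix of \emph{unit} vectors sharing a \emph{single} angle, and confirm that the frame operator being a scalar multiple of the identity --- which is exactly what one nonzero eigenvalue of multiplicity $d = \mathrm{rank}\,G$ delivers --- is precisely the tightness condition, all uniformly over the real and complex settings. The reality of $\mu$ in (ii) is a small but real point that should not be skipped.
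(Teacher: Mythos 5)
Your proof is correct. Note, however, that the paper does not prove this statement at all: it is quoted verbatim as Theorem 3.3 of Holmes and Paulsen \cite{holmes2004optimal} and used as a black box, so there is no internal proof to compare against. Your argument is essentially the standard one from that source: the diagonal identity $(Q^2)_{i,i}=n-1$ drives the equivalence of (ii) and (iii) via the minimal polynomial, and the equivalence with (i) comes from rescaling $Q$ into a positive semidefinite Gram matrix $G=I-\rho_2^{-1}Q$ of rank $d$ and reading off tightness from the fact that $FF^{*}$ inherits a single nonzero eigenvalue of full multiplicity. All the individual steps check out: the reality of $\mu$ from self-adjointness of $Q^2-(n-1)I$, the positive discriminant $\mu^2+4(n-1)$, the opposite signs of $\rho_1,\rho_2$ from $\operatorname{tr}(Q)=0$, and the identification $-\rho_1\rho_2=n-1$. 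The only point worth flagging is the implicit degenerate case of a $2$-uniform frame with all inner products zero (an orthonormal basis, $n=d$), where the signature matrix normalization breaks down; this is excluded by the hypothesis $|q_{i,j}|=1$, but you could say so explicitly when unwinding the definition in the (i) $\Rightarrow$ (iii) direction.
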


 \textit{(i)} is equivalence to "$Q$ is a Seidel matrix of an $n$ points ETF in $\mathbb R^d$" in our case. Then we have the following theorem:
\begin{theorem} \label{ETFs}
If there exists a $n$ point two-distance set in $\mathbb{R}^d$ with distances $1, \delta$ where
\[
    n = \dfrac{(d+1)\left(\left(\dfrac{1+\delta^2}{1-\delta^2}\right)^2 - 1\right)}{\left(\dfrac{1+\delta^2}{1-\delta^2}\right)^2-(d+1)}+1 \in \mathbb{Z},
\] 
then there exists an equiangular tight frame with $n-1$ vectors in $\mathbb{R}^{d+1}$.
\end{theorem}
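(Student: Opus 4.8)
The plan is to combine the equality analysis already carried out in the proof of Theorem~\ref{lem:RelativeBound} with the Holmes--Paulsen characterization of signature matrices (Theorem~\ref{thm:3.3}). Let $S = S(X)$ be the $(n-1)\times(n-1)$ Seidel matrix associated with $X$ as in Section~2, and write $-\gamma = (1+\delta^2)/(1-\delta^2)$ as in the proof of Theorem~\ref{lem:RelativeBound}. Since $n$ attains the bound of that theorem with equality, every inequality in its proof --- the Cauchy--Schwarz estimate \eqref{cauchy1}, the sign estimates, and the final bound \eqref{last} --- must be an equality; this forces $\lambda_1(S) = \cdots = \lambda_{d+1}(S)$ and $\lambda_{n-1}(S) = -\gamma$, so, using $\mathrm{tr}(S) = 0$, the matrix $S$ has the two-element spectrum
\[
\left\{ \left[-\sqrt{\dfrac{(n-2)(d+1)}{n-d-2}}\right]^{\,n-d-2},\ \left[\sqrt{\dfrac{(n-2)(n-d-2)}{d+1}}\right]^{\,d+1} \right\}.
\]
In particular $S$ is a self-adjoint matrix with zero diagonal and $\pm1$ off-diagonal entries having exactly two distinct eigenvalues, so hypothesis (iii) of Theorem~\ref{thm:3.3} holds.

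Next I would apply Theorem~\ref{thm:3.3} (the implication (iii)$\Rightarrow$(i)) to $Q = S$: this yields that $S$ is the signature matrix of a $2$-uniform frame, i.e.\ an equiangular tight frame with $n-1$ vectors in $\mathbb{R}^{d'}$ for some dimension $d'$. To identify $d'$ I would recover the Gram matrix of the frame in the usual way, namely $G' = I - S/\rho$, where $\rho = -\sqrt{(n-2)(d+1)/(n-d-2)}$ is the smallest eigenvalue of $S$. Dividing by $\rho < 0$ sends $\rho \mapsto 0$ and the other eigenvalue of $S$ to a positive number, so $G'$ is positive semidefinite with zero eigenvalue of multiplicity $n-d-2$; hence $\mathrm{rank}(G') = (n-1)-(n-d-2) = d+1$, and $G'$ is the Gram matrix of an ETF of $n-1$ unit vectors spanning $\mathbb{R}^{d+1}$, as claimed. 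One should also record that $\rho < -1$ --- equivalently $(n-2)(d+1) > n-d-2$, which simplifies to $d(n-1) > 0$ and is automatic --- so that the common angle $-1/\rho$ of the resulting equiangular line system is genuinely less than $1$.

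The computations here are routine; the one place that needs care is the bookkeeping of which eigenvalue of $S$ carries multiplicity $n-d-2$, since it is precisely the collapse of that eigenvalue to $0$ in $G'$ that produces the dimension $d+1$ rather than $d$ or $n-d-2$. The only genuine obstacle, in my view, is to make the equality hypothesis $n \in \mathbb{Z}$ do its work: one must reopen the proof of Theorem~\ref{lem:RelativeBound} and verify that equality in the final estimate \eqref{last} propagates backward to force equality everywhere, in particular $\lambda_{n-1}(S) = -\gamma$ together with $\lambda_1(S) = \cdots = \lambda_{d+1}(S)$, which is exactly what pins down the two-element spectrum of $S$ displayed above and thus activates Theorem~\ref{thm:3.3}.
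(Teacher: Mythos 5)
Your proposal is correct and follows essentially the same route as the paper: equality in Theorem~\ref{lem:RelativeBound} forces the two-eigenvalue spectrum of $S$, and Theorem~\ref{thm:3.3} then identifies $S$ as the signature matrix of an ETF. The only difference is that you explicitly carry out the rank computation on $G' = I - S/\rho$ to pin down the ambient dimension $d+1$, a detail the paper leaves implicit; your bookkeeping there is accurate.
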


\begin{proof}
    If $X$ is a two-distance set satisfying the above condition, then by Theorem \ref{lem:RelativeBound}, the Seidel matrix $S = S(X)$ has spectrum
    \[
\left\{ \left[-\sqrt{\dfrac{(n-2)(d+1)}{n-d-2}}\right]^{n-d-2},\left[\sqrt{\dfrac{(n-2)(n-d-2)}{d+1}}\right]^{d+1} \right\}.
\]
According to Theorem \ref{thm:3.3}, $S$ is a Seidel matrix which corresponds to an equiangular tight frame with $n-1$ vectors in $\mathbb{R}^{d+1}$.
\end{proof}

Theorem \ref{ETFs} implies that given $d, n, \delta$ such that $n =\frac{(d+1)\left(\left(\frac{1+\delta^2}{1-\delta^2}\right)^2 - 1\right)}{\left(\frac{1+\delta^2}{1-\delta^2}\right)^2-(d+1)} \in \mathbb Z$. 
If there does not exist an equiangular tight frame having $\frac{(d+1)\left(\left(\frac{1+\delta^2}{1-\delta^2}\right)^2 - 1\right)}{\left(\frac{1+\delta^2}{1-\delta^2}\right)^2-(d+1)}$ vectors in $\mathbb{R}^{d+1}$, then 
\[
g(d) < \dfrac{(d+1)\left(\left(\dfrac{1+\delta^2}{1-\delta^2}\right)^2 - 1\right)}{\left(\dfrac{1+\delta^2}{1-\delta^2}\right)^2-(d+1)} +1.
\]

\begin{example} \label{ex1}
    According to \cite{fickus2015tables}, there does not exist an equiangular tight frame with 76 vectors in $\mathbb{R}^{19}$. By Theorem \ref{ETFs}, this implies that $k = 2, 3, \dots $
    \[
    g_{2k-1}(18) < 77.
    \]
This improves the bound in Table \ref{tab:bounds}.
\end{example}

\begin{theorem} \label{ETFsSpherical}
There exists a $n$ points spherical two-distance set in $\mathbb{S}^{d-1}$ with inner products $a, b$ (where $a+b \geq 0$) if there exists an Equiangular Tight Frame with $n$ vectors in $\mathbb{R}^d$, where
\[
    n = \frac{d\left(\left(\frac{a+b-2}{b-a}\right)^2-1\right)}{\left(\frac{a+b-2}{b-a}\right)^2-d} \in \mathbb{Z}.
\]  
\end{theorem}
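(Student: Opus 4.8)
The plan is to reverse the construction of Section~4, in the spirit of Theorem~\ref{ETFs} and of Step~2 in the proof of Theorem~\ref{correspondence}: from the signature matrix of the given ETF I would build an explicit candidate Gram matrix $G$ and check that it realises a spherical two-distance set with inner products $a,b$.

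First I would fix the spectrum. Suppose an ETF with $n$ vectors in $\mathbb{R}^d$ exists and let $S$ be its Seidel signature matrix. By Theorem~\ref{thm:3.3}, $S$ has exactly two eigenvalues $\rho_1>\rho_2$; its negative eigenvalue $\rho_2$ has multiplicity $n-d$ (the corank of the rank-$d$ Gram matrix of the ETF, which is an affine image of $S$), so from $\operatorname{tr}S=0$ and $\operatorname{tr}S^2=n(n-1)$ one obtains $\rho_1=\sqrt{(n-1)(n-d)/d}$ and $\rho_2=-\sqrt{d(n-1)/(n-d)}$. Writing $\gamma:=(2-a-b)/(b-a)>0$, the hypothesis $n=d(\gamma^2-1)/(\gamma^2-d)$ rearranges to $\gamma^2=d(n-1)/(n-d)$, hence $\rho_2=-\gamma=(a+b-2)/(b-a)$: thus $-\gamma$ is exactly the least eigenvalue of $S$, of multiplicity $n-d$.

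Next I would set, following Section~4,
\[
G=\frac{(b-a)\,S+(2-a-b)\,I+(a+b)\,J}{2},
\]
whose diagonal entries are $\tfrac12\bigl(0+(2-a-b)+(a+b)\bigr)=1$ and whose off-diagonal entries are $b$ where $S_{ij}=1$ and $a$ where $S_{ij}=-1$. Rewriting $G=\tfrac{b-a}{2}(S+\gamma I)+\tfrac{a+b}{2}J$: since $-\gamma$ is the least eigenvalue of $S$, the matrix $S+\gamma I$ is positive semidefinite of rank $d$, the matrix $J$ is positive semidefinite, and $b-a>0$, $a+b\ge0$, so $G\succeq0$. Hence $G$ is the Gram matrix of $n$ unit vectors whose pairwise inner products are $a$ or $b$ with $a+b\ge0$, i.e. of a spherical two-distance set.

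The one genuinely delicate point — which I expect to be the main obstacle — is the dimension count $\operatorname{rank}G\le d$, needed so that the configuration actually lies on $\mathbb{S}^{d-1}$. Because $S$ and $J$ need not commute, multiplicities cannot simply be added; but $S+\gamma I$ and $J$ are both positive semidefinite, so $x\in\ker G$ iff $(S+\gamma I)x=0$ and $Jx=0$, giving $\ker G=V_{-\gamma}(S)\cap\mathbf 1^{\perp}$ and therefore $\operatorname{rank}G=\dim\!\bigl(V_{\rho_1}(S)+\operatorname{span}\{\mathbf 1\}\bigr)$. This equals $d$ exactly when $S\mathbf 1=\rho_1\mathbf 1$, i.e. when the ETF can be represented by a signature matrix of constant row sum $\rho_1$ — equivalently a regular Seidel graph, reachable by Seidel switching when $\rho_1$ is an integer of the right parity — and equals $d+1$ otherwise. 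The case $a+b=0$ is automatic: there the $J$-term vanishes, $G=\tfrac{b-a}{2}(S+\gamma I)$ already has rank $d$, and the construction degenerates into the equiangular-line picture behind Theorem~\ref{lem:SpericalRelativeBound}. So the heart of the proof is to decide precisely when the forced matrix $G$ above has rank $d$ rather than $d+1$; absent that, the clean conclusion holds for $a+b=0$ (and for ETFs arising from strongly regular graphs), the remaining cases placing the set only in $\mathbb{S}^{d}$.
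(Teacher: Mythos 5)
You have proved the converse of what the paper actually establishes. Although the theorem is worded ``there exists a spherical two-distance set \dots\ \emph{if} there exists an ETF'', the paper's own proof (and the only direction needed for Example \ref{ex2}) runs the other way: starting from a spherical two-distance set $X$ that attains the bound of Theorem \ref{lem:SpericalRelativeBound}, the equality analysis there forces the Seidel matrix $S(X)$ to have exactly the two eigenvalues $-\sqrt{d(n-1)/(n-d)}$ and $\sqrt{(n-1)(n-d)/d}$, and Theorem \ref{thm:3.3} then converts $S(X)$ directly into an ETF with $n$ vectors in $\mathbb{R}^d$. That is a three-line argument. Your proposal instead starts from the ETF and tries to manufacture the two-distance set, which is a genuinely different, and harder, claim; note that the contrapositive used in Example \ref{ex2} (``no ETF $\Rightarrow$ no extremal two-distance set'') requires exactly the direction the paper proves, not yours.

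Within the direction you chose, the spectral bookkeeping is correct ($\rho_2=-\gamma$ with multiplicity $n-d$ does follow from $\gamma^2=d(n-1)/(n-d)$), and your candidate Gram matrix $G=\tfrac12\bigl((b-a)S+(2-a-b)I+(a+b)J\bigr)$ is the right one. But the difficulty you flag at the end is a genuine gap, not a technicality: since $\tfrac{b-a}{2}(S+\gamma I)$ and $\tfrac{a+b}{2}J$ are both positive semidefinite, $\ker G$ is the intersection of the $(-\gamma)$-eigenspace of $S$ with $\mathbf{1}^{\perp}$, which has dimension $n-d-1$ (hence $\operatorname{rank}G=d+1$) unless $\mathbf{1}$ lies in the $\rho_1$-eigenspace of $S$. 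An arbitrary ETF signature matrix need not admit a switching representative with $S\mathbf{1}=\rho_1\mathbf{1}$ (this already requires $\rho_1\in\mathbb{Z}$ of the right parity), so for $a+b>0$ your construction in general places the $n$ points only in $\mathbb{S}^{d}$, not $\mathbb{S}^{d-1}$. As written, your argument therefore establishes the stated implication only for $a+b=0$ or for regular signature matrices, and the unconditional claim in your chosen direction remains unproved. The repair is simply to prove the implication the paper intends (extremal spherical two-distance set $\Rightarrow$ ETF), which needs none of this machinery.
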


\begin{proof}
    If $X$ is a two-distance set satisfying the above condition, then by Theorem \ref{lem:SpericalRelativeBound}, the Seidel matrix $S = S(X)$ has spectrum
 \[
\left\{ \left[-\sqrt{\dfrac{d(n-1)}{n-d}}\right]^{n-d},\left[\sqrt{\dfrac{(n-1)(n-d)}{d}}\right]^{d} \right\}.
\]
According to Theorem \ref{thm:3.3}, $S$ is a Seidel matrix which corresponds to an equiangular tight frame with $n$ vectors in $\mathbb{R}^{d}$.
\end{proof}

\begin{example} \label{ex2}
    According to \cite{fickus2015tables}, there does not exist an equiangular tight frame with 76 vectors in $\mathbb{R}^{19}$. By Theorem \ref{ETFsSpherical}, this implies that $k = 2, 3, \dots $
    \[
    M_{2k-1}^{+}(19) < 76.
    \]
This improves the bound in Table \ref{tab:boundsSpherical}.

    By Theorem \ref{correspondence}, this implies that $k = 2, 3, \dots $
    \[
     M_{2k-1}^{-}(18) < 76.
    \]
This improves the bound in Theorem \ref{small}.
\end{example}
\newpage
\begin{table}[!ht]
    \centering
    \begin{tabular}{c|ccc}
         $d$ & $g_{5}(d)$ & $M_{5}^+(d)$ & $M_{5}^-(d)$  \\ \hline
          9  & $17$  &  $13$  & $16$  \\
          10 & $19$  &  $16$  & $19$  \\
          11 & $23$  &  $18$  & $23$  \\
          12 & $27$  &  $22$  & $26$  \\
          13 & $31$  &  $26$  & $31$  \\
          14 & $37$  &  $30$  & $36$  \\
          15 & $43$  &  $36$  & $43$  \\
          16 & $52$  &  $42$  & $51$  \\
          17 & $62$  &  $51$  & $62$  \\
          18 & {\color{red}76}  &  $61$  & {\color{red}75}  \\
          19 & $97$  &  {\color{red}75}  & $96$  \\
          20 & $127$ &  $96$  & $126$ \\
          21 & $177$ &  $126$ & $176$ \\
          22 & $277$ &  $176$ & $276$ \\
          23 &  577  &  $276$ & $576$ \\
    \end{tabular}
    \caption{The comparison between two-distance sets and spherical two-distance sets is presented, along with the upper bounds established by Theorem \ref{lem:RelativeBound}, Theorem \ref{lem:SpericalRelativeBound}, and Corollary \ref{small}. Corrections highlighted in red are provided in Example \ref{ex1} and Example \ref{ex2}.}
    \label{tab:compare}
\end{table}

In this paper, we analyze the spectrum of Seidel matrices constructed from distance relationships to establish upper bounds for two-distance sets in Euclidean space and on the unit sphere. Our key contribution is developing a transformation from Cayley-Menger matrices (for Euclidean sets) and Gram matrices (for spherical sets) to Seidel matrices, extending the classical Lemmens-Seidel approach \cite{LEMMENS1973494} to the two-distance setting. Moreover, we establish the bijective correspondence between equiangular line systems and spherical two-distance sets with $a+b < 0$, and reveal the connection between the existence of ETFs (Equiangular Tight Frames) and the attainability of our upper bounds.

Our upper bounds fundamentally arise from eigenvalue multiplicity constraints that prevent the Seidel matrix structure from becoming degenerate. For a Seidel matrix of order $n$ associated with a $d$-dimensional point configuration, the rank constraints force certain eigenvalues to have bounded multiplicities. When these multiplicities exceed their geometric limits, the corresponding point configuration becomes impossible to realize in the given dimension.

In future work, we plan to investigate Seidel matrices with a smallest eigenvalue of multiplicity $1$ and a second smallest eigenvalue of multiplicity $n-d-3$, which contrasts with equiangular line systems where the smallest eigenvalue has multiplicity $n-d$. By analyzing the possible eigenvalue distributions and adapting methods from equiangular line theory, we aim to determine the existence of two distance set or spherical two distance set with $a+b\ge 0$.

\bibliographystyle{abbrv}
\bibliography{ref}

\end{document}